\numberwithin{equation}{section}
\begin{document}


\newcommand{\az}{\alpha_{Z}}
\newcommand{\bz}{\beta_{Z}}
\newcommand{\ax}{\alpha_{\X}}\newcommand{\bx}{\beta_{\X}}
\newcommand{\popo}{\mathbb{P}^1 \times \mathbb{P}^1}
\newcommand{\pnpm}{\mathbb{P}^n \times \mathbb{P}^m}
\newcommand{\pnr}{\mathbb{P}^{n_1}\times \cdots \times \mathbb{P}^{n_r}}
\newcommand{\pnk}{\mathbb{P}^{n_1}\times \cdots \times \mathbb{P}^{n_k}}
\newcommand{\prthree}{\mathbb{P}^1 \times \mathbb{P}^1 \times \mathbb{P}^1}
\newcommand{\Iz}{I_{Z}}
\newcommand{\Ix}{I_{\X}}
\newcommand{\C}{\mathcal{C}}
\newcommand{\Y}{\mathbb{Y}}
\newcommand{\Z}{\mathbb{Z}}
\newcommand{\Zr}{\mathbb{Z}_{red}}
\newcommand{\N}{\mathbb{N}}
\newcommand{\pr}{\mathbb{P}}
\newcommand{\X}{\mathbb{X}}
\newcommand{\F}{\mathbb{F}}

\newcommand{\supp}{\operatorname{Supp}}
\newcommand{\depth}{\operatorname{depth}}
\newcommand{\ol}{\overline{L}}
\newcommand{\Ssx}{\mathcal S_{\X}}
\newcommand{\Ss}{\mathcal S}
\newcommand{\dtz}{\Delta H_{Z}}
\newcommand{\dtc}{\Delta^{C} H_{Z}}
\newcommand{\dtcc}{\Delta^{C} H_{Z_{ij}}}
\newcommand{\dt}{\Delta}
\newcommand{\B}{\mathcal{B}}
\newcommand{\ay}{\alpha_{Y}}
\newcommand{\by}{\beta_{Y}}
\newcommand{\ui}{\underline{i}}
\newcommand{\ua}{\underline{\alpha}}
\newcommand{\uj}{\underline{j}}

\newtheorem{theorem}{Theorem}[section]
\newtheorem{corollary}[theorem]{Corollary}
\newtheorem{proposition}[theorem]{Proposition}
\newtheorem{lemma}[theorem]{Lemma}
\newtheorem{alg}{Algorithm}
\newtheorem{question}{Question}
\newtheorem{problem}{Problem}
\newtheorem{conjecture}[theorem]{Conjecture}

\theoremstyle{definition}
\newtheorem{definition}[theorem]{Definition}
\newtheorem{remark}[theorem]{Remark}
\newtheorem{example}[theorem]{Example}
\newtheorem{acknowledgement}{Acknowledgement}
\newtheorem{notation}[theorem]{Notation}
\newtheorem{construction}[theorem]{Construction}


\title{Separators of points in a multiprojective space}
\thanks{Version: July 5, 2007 (Final)}
\author{Elena Guardo}
\address{Dipartimento di Matematica e Informatica\\
Viale A. Doria, 6 - 95100 - Catania, Italy}
\email{guardo@dmi.unict.it}

\author{Adam Van Tuyl}
\address{Department of Mathematics \\
Lakehead University \\
Thunder Bay, ON P7B 5E1, Canada}
\email{avantuyl@sleet.lakeheadu.ca}

\keywords{points, multiprojective spaces, arithmetically
Cohen-Macaulay, Hilbert function, resolutions, separators}
\subjclass{13D02, 14M05, 13D40}

\begin{abstract}
In this note we develop some of the properties of
separators of points in a multiprojective
space. In particular, we prove multigraded
analogs of results of Geramita, Maroscia, and Roberts
relating the Hilbert function of $\X$ and $\X \setminus \{P\}$
via the degree of a separator, and Abrescia, Bazzotti, and Marino 
relating the degree of a separator to shifts in
the minimal multigraded free resolution of the ideal of points.
\end{abstract}
\maketitle


\section{Introduction}

Let
$R = k[x_{1,0},\ldots,x_{1,n_1},\ldots,x_{r,0},\ldots,x_{r,n_r}]$
be the $\N^r$-graded polynomial ring with  $\deg x_{i,j} = e_i$, the $i$th standard basis vector in
$\N^r$, and $k$ an algebraically closed field of characteristic zero.
If $\X = \{P_1,\ldots,P_s\}$ is a finite set of points in
a multiprojective space $\pnr$, then $R/\Ix$ is
the associated $\N^r$-graded coordinate ring.   If $P \in \X$,
then the multihomogeneous form $F \in R$ is a {\bf separator for $P$} if
$F(P) \neq 0$ and $F(Q) = 0$ for all $Q \in \X \setminus \{P\}$.  The
{\bf degree of a point $P \in \X$} is the set
\[\deg_{\X}(P) = \min\{\deg F ~|~ F ~~\mbox{is a separator for $P \in \X$}\}.\]
Here, we are using the partial order on $\N^r$ defined by
$(i_1,\ldots,i_r) \succeq (j_1,\ldots,j_r)$ whenever $i_t \geq j_t$ for
all $t= 1,\ldots,r$.
The goal of this note is to record some of the properties of a separator
of a point and its degree in a multigraded context.

The notion of a separator was first introduced for sets of points $\X$ in $\pr^n$
by Orecchia \cite{O} to investigate the conductor of $A = R/\Ix$,
that is, the largest ideal $J$ of $A$ that corresponds
with its extension in the integral closure $\overline{A}$.
It was shown that
the degrees of the minimal generators of $J$ corresponded to the degrees of
the points $P \in \X$.  As later shown by Geramita, Maroscia, and Roberts \cite{gmr},
the degree of a point $P$ allows one to relate the Hilbert function
of $\X$ to that of $\X\setminus \{P\}$.  Abrescia, Bazzotti, and Marino \cite{abm}
demonstrated that $\deg_{\X}(P)$ was also linked to
the shifts appearing in the minimal free graded resolution of $R/\Ix$.  Further
properties of separators in the graded case can be found in
\cite{b,b2,guida,HO,S}, among others.

The study of separators of points in a multigraded setting was initiated by Marino \cite{m1,m2,m3}
who studied separators of points in $\popo$.  Note that when $r \geq 2$,
then it may happen that $|\deg_{\X}(P)| \geq 2$, thus
presenting one of the fundamental differences between
the study of separators of points in $\pr^n$ versus those in $\pnr$.   Marino showed
that
$\X \subseteq \popo$ is arithmetically Cohen-Macaulay (ACM) if and only
if for every $P \in \X$, $|\deg_{\X}(P)| = 1$.  More recently,
the authors \cite{GVT1} extended some
of Marino's results to an arbitrary multiprojective space; in particular,
if $\X \subseteq \pnr$ and is ACM, then every point $P \in \X$ has $|\deg_{\X}(P)| = 1$,
but the converse no longer holds.

While a cursory introduction to the properties of separators appears
in \cite{GVT1}, in this paper we wish to provide a more systematic
introduction,
thereby extending our understanding of
points in a multiprojective space (see, for example,
\cite{GuMaRa,GuMaRa2,GuMaRa3,Gu2,GVT,SVT,VT1,VT2}, for
more on these points).   In Section 2,
we relate the Hilbert functions of
$\X$ and $\X \setminus \{P\}$ using the set $\deg_{\X}(P)$ (see
Theorem \ref{boundsonHx}), thus
introducing a multigraded analog of a result of Geramita, Maroscia, and Roberts \cite{gmr}.
The main result (Theorem \ref{pnrsepfrombetti}) of Section 3 relates $\deg_{\X}(P)$
to the shifts at the end of the multigraded resolution of $R/\Ix$ when
$\X$ is ACM.  This result extends a result of Abrescia, Bazzotti,
and Marino \cite{abm} first proved for separators of points in $\pr^n$.
In the final section, we restrict to the case of ACM points in $\popo$
and their separators.  In particular, we show  (see Theorems \ref{acm} and \ref{nonacm})
that the converse of Theorem \ref{pnrsepfrombetti} holds in $\popo$.

\noindent
{\bf Acknowledgements.}  We would like to thank T\`ai Huy H\`a for his comments on
an earlier draft of this paper.  The second author acknowledges the support of NSERC.
The computer program {\tt CoCoA} \cite{Co} was used during the preliminary 
stages of this paper.


\section{Separators, Hilbert functions, and ACMness}

We continue to use the notation from the introduction.  If
$S \subseteq \N^r$, then $\min S$ denotes the set of the
minimal elements of $S$ with respect to the partial ordering $\succeq$. For
any $\ui\in \N^r$, define $D_{\ui}:=\{\uj \in \N^r ~|~ \uj \succeq
\ui\}.$ For any finite set $S=\{\underline{s}_1,\ldots,\underline{s}_{p}\}\subseteq \N^r$,
we set
\[D_S:=\bigcup_{\underline{s}\in S} D_{\underline{s}}.\]
Note that $ \min D_S = S$; thus $D_S$ can be viewed as the largest
subset of $\N^r$ whose minimal elements are the elements of $S$.

Let $\X$ be a set of distinct points in $\pnr$ and $P \in \X$. We say that the
multihomogeneous form $F\in R$ is
a {\bf minimal separator for} $P$ if $F$ is a separator for $P$,
and if there does not exist a separator $G$ for $P$
with $\deg G \prec \deg F$.  Note that
\[
\deg_{\X}(P)
=\{\deg F ~|~ \mbox{$F$ is a minimal separator of $P \in \X$}\}.
\]
\begin{lemma}\label{existseparator}
Let $\X \subseteq \pnr$ be a set of points and let $P \in \X$.
Then for every $\ui \in D_{\deg_{\X}(P)}$ there exists a form $F$
with $\deg F = \ui$ that is a separator of $P$.
\end{lemma}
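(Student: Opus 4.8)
The plan is to reduce everything to a single observation: multiplying a separator of $P$ by any form that does not vanish at $P$ yields again a separator of $P$, of correspondingly larger multidegree. So the whole statement should follow by starting from a minimal separator realizing some element of $\deg_{\X}(P)$ below $\ui$ and pushing its degree up by a suitable monomial.

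First I would unwind the definitions. Fix $\ui \in D_{\deg_{\X}(P)}$. By the definition of $D_S$ applied to $S = \deg_{\X}(P)$, there is an element $\underline{s} \in \deg_{\X}(P)$ with $\ui \succeq \underline{s}$. Since $\deg_{\X}(P)$ consists exactly of the multidegrees of the minimal separators of $P$, I may choose a separator $G \in R$ with $\deg G = \underline{s}$; thus $G(P) \neq 0$ while $G(Q) = 0$ for every $Q \in \X \setminus \{P\}$. Writing $\ui = (i_1,\ldots,i_r)$ and $\underline{s} = (s_1,\ldots,s_r)$, the relation $\ui \succeq \underline{s}$ means $i_t \geq s_t$ for all $t$, so $\underline{d} := \ui - \underline{s} \in \N^r$.

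Next I would manufacture a form $H$ of multidegree $\underline{d}$ that does not vanish at $P$. Write $P = (P^{(1)},\ldots,P^{(r)})$ with $P^{(t)} \in \pr^{n_t}$. For each $t$ the homogeneous coordinate vector of $P^{(t)}$ has at least one nonzero entry, say in the $x_{t,j_t}$-coordinate; then the monomial $H := \prod_{t=1}^{r} x_{t,j_t}^{d_t}$ satisfies $\deg H = \underline{d}$ and $H(P) = \prod_{t=1}^{r} x_{t,j_t}(P^{(t)})^{d_t} \neq 0$. (When $\underline{d} = 0$ this is the empty product $H = 1$, which is fine.) Finally I would set $F := G\,H$. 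By multiplicativity of the grading $\deg F = \underline{s} + \underline{d} = \ui$; for $Q \in \X \setminus \{P\}$ we get $F(Q) = G(Q)H(Q) = 0$, while $F(P) = G(P)H(P) \neq 0$. Hence $F$ is a separator of $P$ with $\deg F = \ui$, which is exactly the claim.

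I do not anticipate a genuine obstacle here. The only points requiring care are the degree bookkeeping, namely that $\ui \succeq \underline{s}$ is precisely what guarantees $\ui - \underline{s} \in \N^r$ and therefore permits the monomial $H$ to exist, and the routine remark that the vanishing or nonvanishing of a multihomogeneous form at a point of $\pnr$ is independent of the chosen coordinate representatives, which is all the argument uses.
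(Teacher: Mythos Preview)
Your proof is correct and follows essentially the same approach as the paper's: pick a minimal separator realizing some $\underline{s} \preceq \ui$ and multiply it by a form of multidegree $\ui - \underline{s}$ that is nonzero at $P$. The only cosmetic difference is that the paper chooses generic linear forms $L_t$ of degree $e_t$ with $L_t(P)\neq 0$ and uses $H = \prod_t L_t^{i_t - s_t}$, whereas you use coordinate monomials; both work for the same reason.
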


\begin{proof}  Fix a $P \in \X$.
For each $i = 1,\ldots,r$, there exists a form $L_i$ with $\deg
L_i =e_i$ such that $L_i(P) \neq 0$.
Geometrically picking $L_i$ corresponds to picking a hyperplane in
$\pr^{n_i}$ that misses the $i$th coordinates of the points of
$\X$.  For any $\ui \in D_{\deg_{\X}(P)}$, there exists $\ua \in
\deg_{\X}(P)$ with $\ui \succeq \ua$. Let $F'$ be a minimal
separator of $P$ with $\deg F' = \ua$.  Then the desired separator
is
\[F = F'\prod_{j=1}^r L_j^{i_j - \alpha_j}\]
where $\ui = (i_1,\ldots,i_r)$ and $\ua =
(\alpha_1,\ldots,\alpha_r)$.
\end{proof}

If $I$ is a multihomogeneous ideal
of $R$, then the {\bf Hilbert function of $S = R/I$ } is the numerical
function $H_{S}:\N^r \rightarrow \N$ defined by \[H_S(\ui):=
\dim_{k} S_{\ui} = \dim_{k} R_{\ui} - \dim_{k}(I)_{\ui}~~\mbox{for
all $\ui \in \N^r$.}\] When $S = R/I_{\X}$ is the coordinate ring
of a set of points $\X$, then we usually say $H_S$ is the {\bf
Hilbert function of } $\X$, and write $H_{\X}$.

If $P \in \X$
and $\Y = \X \setminus \{P\}$, then $H_{\Y}$ can be computed
from $H_{\X}$ and $\deg_{\X}(P)$ as demonstrated below.  
We view this result as a multigraded version of \cite[Lemma 2.3]{gmr}.

\begin{theorem} \label{boundsonHx}
Let $\X$ be a set of distinct points in $\pnr$, and let $P \in \X$
be any point.  If $\Y = \X \backslash \{P\}$, then \[H_{\Y}(\ui) =
\begin{cases} H_{\X}(\ui) & \text{if $\ui \notin D_{\deg_{\X}(P)}$} \\
H_{\X}(\ui)-1   & \text{if $\ui \in D_{\deg_{\X}(P)}$.}
\end{cases}\]
\end{theorem}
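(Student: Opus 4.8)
The plan is to reduce the statement, in each multidegree $\ui$, to a one–dimensional linear-algebra computation built around evaluation at $P$. First I would note that since $\Y = \X \setminus \{P\} \subseteq \X$, every form vanishing on $\X$ also vanishes on $\Y$, so $\Ix \subseteq I_{\Y}$ as $\N^r$-graded ideals. Using $H_S(\ui) = \dim_k R_\ui - \dim_k (I)_\ui$ for both $S = R/\Ix$ and $S = R/I_{\Y}$, the difference of Hilbert functions becomes
\[
H_{\X}(\ui) - H_{\Y}(\ui) = \dim_k (I_{\Y})_\ui - \dim_k (\Ix)_\ui = \dim_k (I_{\Y}/\Ix)_\ui .
\]
Thus it suffices to prove that $\dim_k (I_{\Y}/\Ix)_\ui$ equals $1$ when $\ui \in D_{\deg_{\X}(P)}$ and equals $0$ otherwise.

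Next I would fix a choice of homogeneous coordinates for $P$ and consider the $k$-linear evaluation map $\mathrm{ev}_P \colon (I_{\Y})_\ui \to k$ sending a form $F$ to its value $F(P)$ at those coordinates. A form $F \in (I_{\Y})_\ui$ lies in $\ker(\mathrm{ev}_P)$ exactly when $F(P) = 0$, which together with the vanishing of $F$ on $\Y$ means $F$ vanishes on all of $\X$; hence $\ker(\mathrm{ev}_P) = (\Ix)_\ui$. By the first isomorphism theorem, $(I_{\Y}/\Ix)_\ui \cong \operatorname{im}(\mathrm{ev}_P) \subseteq k$, so this quotient is either $0$- or $1$-dimensional. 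It is $1$-dimensional precisely when some $F \in (I_{\Y})_\ui$ has $F(P) \neq 0$, that is, precisely when $P$ admits a separator of degree $\ui$.

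It remains to identify the set of degrees of separators of $P$ with $D_{\deg_{\X}(P)}$. Writing $\Ss = \{\deg F \mid F \text{ is a separator of } P\}$, Lemma \ref{existseparator} gives $D_{\deg_{\X}(P)} \subseteq \Ss$. For the reverse inclusion, because the partial order $\succeq$ on $\N^r$ is well-founded, every element of $\Ss$ lies above some minimal element of $\Ss$; since $\deg_{\X}(P) = \min \Ss$ by definition, this yields $\Ss \subseteq D_{\min \Ss} = D_{\deg_{\X}(P)}$. Hence $\Ss = D_{\deg_{\X}(P)}$, so a separator of degree $\ui$ exists if and only if $\ui \in D_{\deg_{\X}(P)}$. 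Combining this with the previous paragraph produces exactly the claimed case distinction.

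I expect the only real care to be needed in the second paragraph: the map $\mathrm{ev}_P$ is genuinely defined only after fixing coordinates for $P$, since $F(P)$ is otherwise determined only up to a nonzero scalar. This ambiguity is harmless, however, because I use nothing about the image beyond its dimension ($0$ versus $1$), and whether the image is $0$ or all of $k$ is independent of the scaling. Everything else is a repackaging of the definition of $\deg_{\X}(P)$ together with Lemma \ref{existseparator} and its (well-foundedness) converse.
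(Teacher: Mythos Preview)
Your proof is correct. The underlying observation is the same as the paper's---namely that $(I_{\Y})_{\ui}\setminus(\Ix)_{\ui}$ consists precisely of the separators of $P$ of degree $\ui$---but the packaging differs in one useful way. The paper invokes an external result (\cite[Theorem 5.3]{GVT1}) to assert that $H_{\X}(\ui)-H_{\Y}(\ui)\in\{0,1\}$ for all $\ui$, and then argues by contradiction in each direction to pin down the set $S$ as $\deg_{\X}(P)$. You bypass that citation entirely: your evaluation-at-$P$ map gives the $0/1$ dichotomy immediately from $\operatorname{im}(\mathrm{ev}_P)\subseteq k$, and simultaneously identifies when the image is nonzero. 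This makes your argument self-contained, while the paper's version leans on prior work. After that point the two proofs are essentially the same, both reducing to Lemma~\ref{existseparator} and the well-foundedness of $\succeq$ on $\N^r$.
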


\begin{proof}
It was shown in \cite[Theorem 5.3]{GVT1} that there exists a finite
set $S \subseteq \N^r$ such that
\[H_{\Y}(\ui) =
\begin{cases} H_{\X}(\ui) & \text{if $\ui \notin D_{S}$} \\
H_{\X}(\ui)-1   & \text{if $\ui \in D_{S}$.}
\end{cases}\]
It therefore  suffices to show that
$S = \deg_{\X}(P)$. Suppose $\ui \not\in D_{\deg_{\X}(P)}$ but
$H_{\Y}(\ui) = H_{\X}(\ui)-1$. This implies that $\dim_k
(I_{\Y})_{\ui} = \dim_k (I_{\X})_{\ui}+1$, or equivalently, there
exists a form $F \in (I_{\Y})_{\ui}\backslash (I_{\X})_{\ui}$.
But then $F$ vanishes at all the points of $\Y$ but not at all
the points of $\X$, i.e., $F$ does not vanish at $P$.  So $F$ is
a separator of $P$, and thus there exists an $\ua \in
\deg_{\X}(P)$ such that $\ui \succeq \ua$.  But this contradicts
the fact that $\ui \not\in D_{\deg_{\X}(P)}$.  So $H_{\Y}(\ui) =
H_{\X}(\ui)$.

Now suppose that $\ui \in D_{\deg_{\X}(P)}$ but $H_{\Y}(\ui) =
H_{\X}(\ui)$. By Lemma \ref{existseparator} there exists a form $F$
with $\deg F = \ui$ such that $F$ is a separator of $P$.  So $F
\in (I_{\Y})_{\ui}$ but $F \not\in (I_{\X})_{\ui}$.   This
contradicts the fact that  $H_{\Y}(\ui) = H_{\X}(\ui)$ implies
$\dim_k (I_{\Y})_{\ui}= \dim_k (I_{\X})_{\ui}$.  So
 $H_{\Y}(\ui) = H_{\X}(\ui) -1$.
\end{proof}

\begin{remark} \label{hxremark}
Theorem \ref{boundsonHx} shows that
$\deg_{\X}(P) = \min\{\ui \in \N^r ~|~ H_{\X}(\ui) \neq
H_{\Y}(\ui)\}$.  One can therefore compute $\deg_{\X}(P)$
by comparing the Hilbert functions of $\X$ and $\Y = \X \setminus \{P\}$.
\end{remark}

If $\X \subseteq \pr^n$, then $\N$ is a totally ordered set, so we
can study {\it the} degree of a point $P \in \X$ (as in
\cite{abm,b,b2,guida,HO,O,S}). In the
multigraded case the set $\deg_{\X}(P) =
\{\ua_1,\ldots,\ua_s\} \subseteq \N^r$ may have $s \geq 1$.  
However, if $F$ is a minimal separator of $P$ with
$\deg F = \ua_i \in \deg_{\X}(P)$, then the equivalence
class of $F$ in $R/\Ix$ is unique
up to scalar multiplication.

\begin{theorem}[{\cite[Corollary 5.4]{GVT1}}]\label{degreeunique}
Suppose $\deg_{\X}(P) = \{\ua_1,\ldots,\ua_s\} \subseteq \N^r$.
If $F$ and $G$ are any two minimal separators of $P$ with
$\deg F = \deg G = \ua_i$, then there exists $0 \neq c \in k$
such that $\overline{G} = \overline{cF} \in R/\Ix$.
\end{theorem}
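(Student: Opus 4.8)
The plan is to derive the statement directly from Theorem \ref{boundsonHx}, by reducing it to the observation that a certain graded component of a quotient of ideals is one-dimensional. Set $\Y = \X \setminus \{P\}$. Since $\Y \subseteq \X$ we have $\Ix \subseteq I_{\Y}$, and in particular $(\Ix)_{\ua_i} \subseteq (I_{\Y})_{\ua_i}$. By definition a separator of $P$ vanishes at every point of $\Y$ but not at $P$; hence both $F$ and $G$ lie in $(I_{\Y})_{\ua_i}$, while neither lies in $(\Ix)_{\ua_i}$, because $F(P) \neq 0$ and $G(P) \neq 0$ force $F, G \notin \Ix$. Consequently the images $\overline{F}$ and $\overline{G}$ of $F$ and $G$ in the quotient space $(I_{\Y})_{\ua_i}/(\Ix)_{\ua_i}$ are both nonzero.

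The next step is to compute $\dim_k (I_{\Y})_{\ua_i}/(\Ix)_{\ua_i}$. Because $\ua_i \in \deg_{\X}(P) \subseteq D_{\deg_{\X}(P)}$, Theorem \ref{boundsonHx} yields $H_{\Y}(\ua_i) = H_{\X}(\ua_i) - 1$. Writing each side through the identity $H_S(\ua_i) = \dim_k R_{\ua_i} - \dim_k (I)_{\ua_i}$ and cancelling the common term $\dim_k R_{\ua_i}$, this relation becomes $\dim_k (I_{\Y})_{\ua_i} = \dim_k (\Ix)_{\ua_i} + 1$. Hence the quotient $(I_{\Y})_{\ua_i}/(\Ix)_{\ua_i}$ is a one-dimensional $k$-vector space.

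Finally, since $\overline{F}$ and $\overline{G}$ are two nonzero vectors in a one-dimensional space, there is a scalar $0 \neq c \in k$ with $\overline{G} = c\,\overline{F}$ in $(I_{\Y})_{\ua_i}/(\Ix)_{\ua_i}$, i.e.\ $G - cF \in (\Ix)_{\ua_i} \subseteq \Ix$. This says precisely that $\overline{G} = \overline{cF}$ in $R/\Ix$, as desired. I expect no real obstacle here: the mathematical content is entirely carried by the dimension count, which is an immediate consequence of Theorem \ref{boundsonHx}, and the remainder is the trivial remark that nonzero vectors on a line are proportional. The only point deserving a line of care is confirming that $F$ and $G$ do not already lie in $(\Ix)_{\ua_i}$, so that their images in the quotient are genuinely nonzero; this is exactly the non-vanishing condition $F(P), G(P) \neq 0$ built into the definition of a separator.
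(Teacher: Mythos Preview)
Your proof is correct. The paper itself does not prove this statement: it simply quotes it as \cite[Corollary 5.4]{GVT1} without argument. Your derivation from Theorem~\ref{boundsonHx} is exactly the natural route, and since Theorem~\ref{boundsonHx} in this paper is the present incarnation of \cite[Theorem 5.3]{GVT1} (from which Corollary~5.4 there presumably follows), your argument almost certainly reproduces the original one in \cite{GVT1}. The dimension count $\dim_k (I_{\Y})_{\ua_i} = \dim_k (\Ix)_{\ua_i} + 1$ and the observation that $F,G \in (I_{\Y})_{\ua_i}\setminus (\Ix)_{\ua_i}$ are all that is needed, and you have both.
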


If $\Y = \X \setminus \{P\}$ for some $P \in \X$, then the defining
ideals of $I_P$, $I_\Y$ and $I_{\X}$ are related via the
separators of $P$, as demonstrated below.

\begin{theorem} \label{teopt} Let $\X$ be a set of points in $\pnr$,
$P \in \X$, and $\Y = \X \setminus \{P\}$.
\begin{enumerate}
\item[$(i)$]
If $F$ is a separator of a point $P$, then $(I_{\X}:F) = I_P$.
\item[$(ii)$]  If $\deg_{\X}(P) = \{\ua_1,\ldots,\ua_s\}$, and
if $F_i$ is a minimal separator of $P$ with
$\deg F_i = \ua_i$,  then
$I_{\Y} = (I_{\X},F_1,\ldots,F_s)$.
\end{enumerate}
\end{theorem}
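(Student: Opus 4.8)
My plan is to treat the two parts separately: part $(i)$ follows from the prime decomposition of $\Ix$ together with the behaviour of colon ideals under intersection, while part $(ii)$ is best handled by a Hilbert function comparison built on Theorem \ref{boundsonHx}.

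For part $(i)$, I would begin from the primary (here prime) decomposition $\Ix = \bigcap_{Q \in \X} I_Q$, where $I_Q$ is the defining prime of the point $Q$. Since forming the colon against a fixed form commutes with intersection, $(\Ix : F) = \bigcap_{Q \in \X}(I_Q : F)$. For a prime $I_Q$ one has the dichotomy $(I_Q : F) = I_Q$ when $F \notin I_Q$ and $(I_Q : F) = R$ when $F \in I_Q$. Because $F$ is a separator of $P$, it vanishes at every $Q \in \Y$, so each such factor is $R$, while $F(P) \neq 0$ gives $F \notin I_P$, so that factor is $I_P$. Intersecting yields $(\Ix : F) = I_P$, which is exactly $(i)$. (All ideals and $F$ are multihomogeneous, so the colon ideal is as well, and no care beyond the standard homogeneous colon is needed.)

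For part $(ii)$, set $J = (\Ix, F_1, \ldots, F_s)$. The inclusion $J \subseteq I_{\Y}$ is immediate: $\Ix \subseteq I_{\Y}$ since $\Y \subseteq \X$, and each $F_i$ vanishes on all of $\Y$ by the definition of a separator, so $F_i \in I_{\Y}$. For the reverse inclusion I would argue in each multidegree $\ui$, using the sandwich $(\Ix)_{\ui} \subseteq J_{\ui} \subseteq (I_{\Y})_{\ui}$. By Theorem \ref{boundsonHx} the gap $\dim_k(I_{\Y})_{\ui} - \dim_k(\Ix)_{\ui}$ is $0$ when $\ui \notin D_{\deg_{\X}(P)}$ and $1$ when $\ui \in D_{\deg_{\X}(P)}$. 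In the first case the two outer spaces coincide, forcing $J_{\ui} = (I_{\Y})_{\ui}$. In the second case it suffices to produce a single element of $J_{\ui}$ lying outside $(\Ix)_{\ui}$: choosing $\ua_i \in \deg_{\X}(P)$ with $\ui \succeq \ua_i$ and linear forms $L_j$ with $L_j(P) \neq 0$ as in Lemma \ref{existseparator}, the product $F_i \prod_{j} L_j^{i_j - (\alpha_i)_j}$ is a separator of $P$ of degree $\ui$; it lies in $J_{\ui}$ (being $F_i$ times a form of $R$) but not in $(\Ix)_{\ui}$ (since it does not vanish at $P$). Thus $J_{\ui}$ strictly contains $(\Ix)_{\ui}$ inside the one-dimension-larger space $(I_{\Y})_{\ui}$, so $J_{\ui} = (I_{\Y})_{\ui}$; summing over all $\ui$ gives $J = I_{\Y}$.

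The delicate point, and the only place real content enters, is the dimension bookkeeping in $(ii)$: the whole argument depends on the gap $\dim_k(I_{\Y})_{\ui} - \dim_k(\Ix)_{\ui}$ never exceeding $1$, which is precisely what Theorem \ref{boundsonHx} supplies, so that exhibiting one honest separator in each relevant degree fills the gap completely. I should also be careful that the separator I build genuinely lies in $J$ and not merely in $I_{\Y}$; in the construction above this is transparent because $F_i$ is one of the chosen generators of $J$. Were one instead to invoke Lemma \ref{existseparator} verbatim, one would need to observe via Theorem \ref{degreeunique} that the minimal separator it uses agrees with $F_i$ modulo $\Ix \subseteq J$, so membership in $J$ is preserved.
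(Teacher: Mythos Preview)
Your proof is correct and, for part $(ii)$, follows essentially the same Hilbert-function comparison as the paper: both argue degree by degree using Theorem~\ref{boundsonHx} to bound the gap $\dim_k(I_{\Y})_{\ui}-\dim_k(\Ix)_{\ui}$ by one and then exhibit an element of $J_{\ui}\setminus(\Ix)_{\ui}$ when $\ui\in D_{\deg_{\X}(P)}$. For part $(i)$ the paper simply cites \cite[Theorem~5.5]{GVT1}, whereas you supply the standard prime-decomposition argument directly; this is a perfectly good (and likely the intended) proof.
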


\begin{proof}
Statement $(i)$ is \cite[Theorem 5.5]{GVT1}.  For $(ii)$,
the containment $(I_{\X},F_1,\ldots,F_s) \subseteq I_{\Y}$ is
clear since $F_i \in I_{\Y}$ for each $i$ and $I_{\X} \subseteq
I_{\Y}$.
Now, if $\ui \not\in D_{\deg_{\X}(P)}$, then
$(I_{\X},F_1,\ldots,F_s)_{\ui} = (I_{\X})_{\ui} = (I_{\Y})_{\ui}$
where the last equality is a consequence of Theorem
\ref{boundsonHx}.  On the other hand, if $\ui \in
D_{\deg_{\X}(P)}$, then
\[\dim_k (I_{\X})_{\ui} < \dim_k (I_{\X},F_1,\ldots,F_s)_{\ui} \leq
\dim_k (I_{\Y})_{\ui} \leq \dim_k (I_{\X})_{\ui}  + 1.\]
The last inequality follows from Theorem
\ref{boundsonHx}.  We are
forced to have $\dim_k (I_{\X})_{\ui} + 1 = \dim_k
(I_{\X},F_1,\ldots,F_s)_{\ui} = \dim_k (I_{\Y})_{\ui}$, i.e.,
$(I_{\X},F_1,\ldots,F_s)_{\ui} = (I_{\Y})_{\ui}$. Since
$(I_{\X},F_1,\ldots,F_s) \subseteq I_{\Y}$, and
$(I_{\X},F_1,\ldots,F_s)_{\ui} = (I_{\Y})_{\ui}$ for all $\ui \in
\N^r$, this completes the proof.
\end{proof}

We end this section discussing the connection
between separators and the ACMness of a set of points.
For any finite set of points $\X \subseteq \pnr$, it
can be shown (see, for example \cite[Theorem 2.1]{GVT1})
that $\dim R/I_{\X} = r$  and $1 \leq \depth R/I_{\X} \leq r.$
When $\depth R/I_\X = r$, then we say $\X$ is
{\bf arithmetically Cohen-Macaulay} (ACM).
Although it remains an open problem to classify
ACM sets of points in a multiprojective space (see \cite{GVT1} for
some work on this problem), it can be shown that
the separators of ACM sets of points have a particularly
nice property:

\begin{theorem}[{\cite[Theorem 5.7]{GVT1}}]\label{uniqueMultsep}
Let $\X$ be any ACM set of points in $\pnr$.   Then $|\deg_{\X}(P)|  = 1$
for every $P \in \X$.
\end{theorem}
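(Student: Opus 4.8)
The plan is to study the single module $M := I_{\Y}/\Ix \subseteq A := R/\Ix$, where $\Y = \X\setminus\{P\}$, and to show that the ACM hypothesis forces $M$ to be free of rank one over a polynomial ring, which pins down $\deg_{\X}(P)$ to a single element. First I would record three facts that hold for an arbitrary point configuration. By Theorem \ref{boundsonHx}, the Hilbert function of $M$ is $H_M(\ui) = H_{\X}(\ui)-H_{\Y}(\ui)$, which is the $\{0,1\}$-valued indicator function of $D_{\deg_{\X}(P)}$. By Theorem \ref{teopt}$(i)$ each minimal separator $F$ satisfies $(\Ix:F)=I_P$, so $I_P\overline{F}=0$ in $A$; since by Theorem \ref{teopt}$(ii)$ the images of the minimal separators generate $M$, the prime $\mathfrak{p}:=I_P/\Ix$ annihilates $M$. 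Hence $M$ is a module over $T := A/\mathfrak{p} = R/I_P$, and because $P$ is a single point, $T$ is a polynomial ring in $r$ variables (one surviving linear form of degree $e_i$ from each factor), so $T$ is regular of dimension $r$ with $\dim_k T_{\ui}=1$ for all $\ui$. A localization at the minimal prime $\mathfrak{p}$ shows $\operatorname{Supp}_A M = V(\mathfrak{p})$, so $\dim_A M = r$.

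The heart of the argument is to prove that $\X$ ACM implies $\depth_A M = r$, i.e. that $M$ is maximal Cohen--Macaulay. I would work with local cohomology at the irrelevant maximal ideal $\mathfrak{m}$, using that $\X$ ACM means $H^i_{\mathfrak{m}}(A)=0$ for $i<r$. The obstacle is that $\Y$ need not be ACM, so I cannot read off $\depth M$ directly from $0\to M\to A\to R/I_{\Y}\to 0$. I would first bound the intermediate cohomology of $\Y$ through the Mayer--Vietoris sequence
\[0 \longrightarrow A \longrightarrow R/I_{\Y}\oplus R/I_P \longrightarrow R/(I_{\Y}+I_P) \longrightarrow 0,\]
which comes from $\Ix = I_{\Y}\cap I_P$. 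Since $\Y$ and $\{P\}$ are disjoint, $R/(I_{\Y}+I_P)$ is supported only at $\mathfrak{m}$, so its higher local cohomology vanishes; combined with $H^i_{\mathfrak{m}}(A)=H^i_{\mathfrak{m}}(R/I_P)=0$ for $i<r$, the long exact sequence yields $H^i_{\mathfrak{m}}(R/I_{\Y})=0$ for $1\le i\le r-2$. Feeding this into the long exact sequence of $0\to M\to A\to R/I_{\Y}\to 0$ gives $H^j_{\mathfrak{m}}(M)\cong H^{j-1}_{\mathfrak{m}}(R/I_{\Y})$ for $2\le j\le r-1$, together with $H^0_{\mathfrak{m}}(M)=H^1_{\mathfrak{m}}(M)=0$; all of these vanish, so $H^j_{\mathfrak{m}}(M)=0$ for $j<r$ and $M$ is MCM.

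Finally I would conclude by homological algebra over the regular ring $T$. A maximal Cohen--Macaulay module over $T$ has projective dimension $0$ by Auslander--Buchsbaum (and $\depth_A M = \depth_T M$ since $\mathfrak{p}$ kills $M$), hence $M$ is free. A free $T$-module $\bigoplus_k T(-\underline c_k)$ has Hilbert function $\sum_k \mathbf{1}_{D_{\underline c_k}}$, which exceeds $1$ on the (nonempty) overlap of any two cones; since $H_M$ is $\{0,1\}$-valued, $M$ must be free of rank one, say $M\cong T(-\ua)$. Comparing Hilbert functions then gives $D_{\deg_{\X}(P)} = D_{\ua}$, so $\deg_{\X}(P)=\{\ua\}$ and $|\deg_{\X}(P)|=1$. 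I expect the Mayer--Vietoris vanishing step to be the main obstacle, both because it is where the ACM hypothesis is genuinely used and because one must verify carefully that $R/(I_{\Y}+I_P)$ has dimension zero and that depth/local cohomology behave as expected in the $\N^r$-graded setting.
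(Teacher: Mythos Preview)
The paper does not prove this statement; it is quoted from \cite{GVT1} without proof, so there is no in-paper argument to compare against. I will therefore comment on the correctness of your proposal.

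Your overall strategy is sound: setting $M=I_{\Y}/\Ix$, showing it is an MCM $A$-module, hence free over the regular ring $T=R/I_P$, hence of rank one by the $\{0,1\}$-valued Hilbert function, is a clean way to force $|\deg_{\X}(P)|=1$. The first paragraph is fine: $\mathfrak p$ does annihilate $M$ (in fact $\operatorname{Ann}_A(m)=\mathfrak p$ for every nonzero $m\in M$, so $M$ is even torsion-free over $T$), and the Hilbert-function and rank-one deductions at the end are correct.

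The gap is in the Mayer--Vietoris step. The assertion ``since $\Y$ and $\{P\}$ are disjoint, $R/(I_{\Y}+I_P)$ is supported only at $\mathfrak m$'' is false when $r\ge 2$. Disjointness in $\pnr$ does not force the \emph{affine cones} in $\operatorname{Spec} R$ to meet only at the origin: the cone over a point of $\pnr$ is $r$-dimensional, and two such cones typically meet along a positive-dimensional locus inside the irrelevant set. Concretely, in $\popo$ take $P=[1{:}0]\times[1{:}0]$ and $Q=[1{:}0]\times[0{:}1]$; then $I_P+I_Q=(x_1,y_0,y_1)$ and $R/(I_P+I_Q)\cong k[x_0]$ has dimension $1$, not $0$. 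More generally, after passing to $T\cong k[z_1,\dots,z_r]$ one finds $R/(I_{\Y}+I_P)\cong T/(z^{\ua_1},\dots,z^{\ua_s})$ where $\deg_{\X}(P)=\{\ua_1,\dots,\ua_s\}$; this quotient has dimension $r-1$ when $s=1$ but can have much smaller depth when $s\ge 2$ (e.g.\ $(z_1,z_2)\subset k[z_1,z_2,z_3]$ gives depth $1<r-1$). Thus you cannot conclude $H^i_{\mathfrak m}(R/(I_{\Y}+I_P))=0$ for $1\le i\le r-2$, and the chain of isomorphisms leading to $\depth M=r$ breaks precisely in the case $s\ge 2$ you are trying to exclude.

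The usual proof (as in \cite{GVT1}) avoids local cohomology altogether: pick linear forms $L_1,\dots,L_r$ with $\deg L_i=e_i$ nonvanishing on $\X$; by ACMness they form a regular sequence on $A$. If $\ua,\ub\in\deg_{\X}(P)$ are incomparable with minimal separators $F,G$, then $L^{\,\ub\vee\ua-\ua}\,\overline F$ and $L^{\,\ub\vee\ua-\ub}\,\overline G$ lie in the one-dimensional space $M_{\ua\vee\ub}$, hence are proportional. Passing to $A/L_1A$ (choosing a coordinate where $a_1<b_1$) kills one side; since the remaining $L_i$ are still nonzerodivisors on $A/L_1A$, one gets $\overline G\in L_1A$, producing a separator of degree $\ub-e_1\prec\ub$ and contradicting minimality. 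If you want to keep your module-theoretic framing, this same argument shows directly that $L_1,\dots,L_r$ is $M$-regular, which is exactly the MCM statement you need.
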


In the case of ACM sets of points in $\pnr$, we can talk about
{\it the} degree of a point, and in this case we usually abuse
notation and write $\deg_{\X}(P) = \alpha$ instead of $\deg_{\X}(P) =\{\alpha\}$.
Although the converse of Theorem \ref{uniqueMultsep} fails to
hold in general (see \cite[Example 5.10]{GVT1}), the converse holds in $\popo$ as first demonstrated
by Marino:

\begin{theorem}[\cite{m3}]\label{marino}
Let $\X$ be a finite set of points in $\popo$.  Then
$\X$ is ACM if and only if
$|\deg_{\X}(P)| = 1$ for every $P \in \X$.
\end{theorem}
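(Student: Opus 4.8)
The forward implication is immediate: Theorem~\ref{uniqueMultsep}, applied in the special case $\pnr = \popo$, already shows that an ACM set of points forces $|\deg_{\X}(P)| = 1$ for every $P \in \X$. So the content of the statement is the converse, and my plan is to prove its contrapositive: if $\X$ is \emph{not} ACM, then some point $P \in \X$ admits two minimal separators of incomparable bidegrees, so that $|\deg_{\X}(P)| \geq 2$.

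To locate such a $P$, I would first encode $\X$ through its two projections. Writing $\pi_1,\pi_2$ for the projections of $\popo$ onto its factors, the fibres of $\pi_1$ partition $\X$ and record the tuple $\ax$ (the numbers of points of $\X$ lying over each point of $\pi_1(\X)$, listed in non-increasing order), and symmetrically $\pi_2$ records $\bx$. Thus $\X$ may be viewed as a $\{0,1\}$-array on the grid $\pi_1(\X)\times\pi_2(\X)$, with row sums $\ax$ and column sums $\bx$. I would then invoke the combinatorial description of ACMness for points in $\popo$ (see \cite{GVT1} and the references therein): $\X$ is ACM exactly when this array is, up to permutation of rows and columns, the indicator of a Ferrers (staircase) diagram, equivalently when the supports of the rows form a chain under inclusion, equivalently when $\bx = \ax^{\star}$, the conjugate partition of $\ax$. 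If $\X$ is not ACM, this chain condition fails, and I would extract from the failure an \emph{overhang}: two fibres $i < i'$ (so the $i$-th fibre is at least as large as the $i'$-th) and a column $c$ with $(A_{i'},B_c)\in\X$ but $(A_i,B_c)\notin\X$. Set $P=(A_{i'},B_c)$.

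At this $P$ I would build two separators of incomparable bidegree, generalising the prototype of two diagonal points, whose only minimal separators are the two rulings of bidegrees $(1,0)$ and $(0,1)$. The first separator clears the remaining $\pi_1$-fibres and then the competitors of $P$ inside its own fibre, using linear forms pulled back from each factor; the second clears the remaining $\pi_2$-fibres and then the competitors inside $P$'s column. The asymmetry recorded by the overhang is precisely what prevents the bidegree of either construction from dominating the other, so the two resulting bidegrees are incomparable.

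The crux is minimality: I must show that the drop in the Hilbert function cannot be accounted for by a single lower bidegree, so that $\deg_{\X}(P)$ genuinely has two incomparable elements. Here I would pass to Hilbert functions through Remark~\ref{hxremark}, which identifies $\deg_{\X}(P)$ with the minimal elements of the drop region $\{\ui \in \N^2 \mid H_{\X}(\ui) \neq H_{\Y}(\ui)\}$, where $\Y = \X\setminus\{P\}$. Using the explicit formula for the Hilbert function of points in $\popo$ to compute $H_{\X}$ and $H_{\Y}$, I would check that the overhang forces this drop region to have at least two incomparable minimal corners, matching the two constructions above; in particular no single bidegree below both can itself be a separator degree. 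I expect this dimension bookkeeping for the spaces of bihomogeneous forms vanishing on the relevant subconfigurations to be the main technical obstacle, since everything else is either the cited ACM criterion or the formal machinery of separators already assembled in Theorem~\ref{boundsonHx} and Lemma~\ref{existseparator}.
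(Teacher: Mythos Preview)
The paper does not prove Theorem~\ref{marino}: it is attributed to Marino \cite{m3} (a reference listed as ``in preparation'') and is simply quoted, so there is no argument in the paper against which to compare your proposal.

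That said, your strategy is sound. The forward direction is exactly Theorem~\ref{uniqueMultsep}. For the converse, the chain/Ferrers criterion you invoke is equivalent to property~$(\star)$, recorded later in the paper as Theorem~\ref{theorem*}; your ``overhang'' (rows $i,i'$ with $|\text{row }i|\ge |\text{row }i'|$ and a column $c$ meeting row $i'$ but not row $i$) is precisely a witness to its failure. The two separators you build at $P=A_{i'}\times B_c$ have bidegrees $(|\pi_1(\X)|-1,\,|\text{row }i'|-1)$ and $(|\text{col }c|-1,\,|\pi_2(\X)|-1)$, and these are genuinely incomparable because the overhang forces both $|\text{col }c|<|\pi_1(\X)|$ and $|\text{row }i'|<|\pi_2(\X)|$.

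The minimality step you flag as the crux is in fact short and does not need a general Hilbert-function formula. First, restricting any separator $F$ of $P$ to column~$c$ (respectively row~$i'$) shows that $\deg F \succeq (|\text{col }c|-1,\,|\text{row }i'|-1)$. If equality held, restrict $F$ to row~$i$: this is a form of $y$-degree $|\text{row }i'|-1$ vanishing at all $|\text{row }i|\ge |\text{row }i'|$ points there, hence identically zero, so $L_{A_i}\mid F$. But then $L_{A_i}$ would divide $F(-,B_c)$, which is a nonzero form of $x$-degree $|\text{col }c|-1$ whose roots are exactly the $A_j$ with $j$ in column~$c$ and $j\ne i'$; since $(A_i,B_c)\notin\X$, $A_i$ is not among these, a contradiction. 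Hence no separator of $P$ realises the meet bidegree, so $\deg_\X(P)$ has at least two incomparable elements, completing your argument.
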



\section{The degree of a point and the minimal resolution}

By Theorem \ref{boundsonHx}, if we can compute
$\deg_{\X}(P)$, then $H_{\Y}$ can be computed from $H_{\X}$
where $\Y = \X \setminus \{P\}$.  It is
therefore of interest to identify what finite subsets $S
\subseteq \N^r$ can be the degree of a point.  In this
section, we show that under the extra
hypothesis that $\X$ is ACM,  information about $\deg_{\X}(P)$ can
be read from the last shift in the minimal multigraded resolution
of $\Ix$.  Our result can
be seen as a multigraded analog of a theorem of Abrescia,
Bazzotti, and Marino \cite{abm}.  We begin with a lemma.

\begin{lemma} \label{residealpt}
Let $P \in \pnr$ be any point.  Then the minimal $\N^r$-graded
free resolution of $R/I_{P}$ has the form
\[0 \rightarrow \mathbb{G}_{t} \rightarrow \mathbb{G}_{t-1}
\rightarrow \cdots \rightarrow \mathbb{G}_1 \rightarrow R
\rightarrow
 R/I_{P} \rightarrow 0\]
where $t = \sum_{i=1}^r n_i$ and $\mathbb{G}_t =
R(-n_1,-n_2,\ldots,-n_r)$.
\end{lemma}

\begin{proof}
Because $I_P$ is a complete intersection,
the conclusions follow from the Koszul resolution,
taking into account the multigrading.
\end{proof}

\begin{theorem}\label{pnrsepfrombetti}
Let $\X$ be a finite set of  points in $\pnr$, and furthermore,
suppose that $\X$ is ACM.  Let $P \in \X$, and suppose
that $\deg_{\X}(P) = \underline{\alpha} = (\alpha_1,\ldots,\alpha_r)$.  Let
\[ 0 \rightarrow \F_t =\bigoplus_{\ui \in S_t} R(-\ui)
{\rightarrow}\cdots\rightarrow \F_1 \rightarrow R \rightarrow
R/I_{\X} \rightarrow 0\] be the minimal $\N^r$-graded free
resolution of $R/I_{\X}$ where $t = \sum_{i=1}^r n_i$.
\begin{enumerate}
\item[$(i)$]  If $\Y = \X \setminus \{P\}$ is ACM, then
$(\alpha_1+n_1,\ldots,\alpha_r+n_r) \in S_t$, that is,
$\deg_{\X}(P)+(n_1,\ldots,n_r)$ appears as a shift in the last
free $R$-module.
\item[$(ii)$] If $\Y = \X \setminus \{P\}$ is not ACM, then
$\depth(R/I_{\Y}) = r-1$.
\end{enumerate}
\end{theorem}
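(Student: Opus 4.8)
The plan is to connect the resolution of $R/I_{\X}$ to that of $R/I_P$ via the short exact sequences coming from the separators, using Theorem \ref{teopt} as the main bridge.

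First I would set up the key short exact sequence. Since $\X$ is ACM and $\deg_{\X}(P) = \underline{\alpha}$ is a single element (by Theorem \ref{uniqueMultsep}), there is a unique minimal separator $F$ up to scalar (Theorem \ref{degreeunique}) with $\deg F = \underline{\alpha}$. By Theorem \ref{teopt}$(ii)$, we have $I_{\Y} = (I_{\X}, F)$, and by Theorem \ref{teopt}$(i)$, $(I_{\X}:F) = I_P$. The plan is to exploit the isomorphism $I_{\Y}/I_{\X} \cong (R/(I_{\X}:F))(-\underline{\alpha}) = (R/I_P)(-\underline{\alpha})$ induced by multiplication by $F$. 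This yields the short exact sequence
\[
0 \rightarrow (R/I_P)(-\underline{\alpha}) \rightarrow R/I_{\X} \rightarrow R/I_{\Y} \rightarrow 0.
\]

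For part $(i)$, I would assume $\Y$ is ACM and compare resolutions using the mapping cone. Since both $\X$ and $\Y$ are ACM, $R/I_{\X}$ and $R/I_{\Y}$ both have projective dimension $t = \sum n_i$ (they have depth $r$ in a ring of dimension $\sum n_i + r$, so the Auslander--Buchsbaum formula gives $\operatorname{pd} = \sum n_i$), and $(R/I_P)(-\underline{\alpha})$ also has projective dimension $t$ by Lemma \ref{residealpt}. The mapping cone of the map of resolutions lifting $R/I_{\X} \to R/I_{\Y}$ produces a (not necessarily minimal) resolution of $(R/I_P)(-\underline{\alpha})$, or dually, the resolution of $R/I_{\Y}$ arises as a mapping cone built from those of $(R/I_P)(-\underline{\alpha})$ and $R/I_{\X}$. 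At homological position $t$, the last module in the resolution of $(R/I_P)(-\underline{\alpha})$ is $R(-\underline{\alpha})(-(n_1,\ldots,n_r)) = R(-(\alpha_1+n_1,\ldots,\alpha_r+n_r))$ by Lemma \ref{residealpt}. The heart of the argument is to show this summand survives into the minimal resolution of $R/I_{\X}$, i.e.\ that $(\alpha_1+n_1,\ldots,\alpha_r+n_r) \in S_t$. I would do this by a multigraded Tor computation: analyzing the long exact sequence in $\operatorname{Tor}^R_\bullet(-, k)$ associated to the short exact sequence above in multidegree $\underline{\alpha}+(n_1,\ldots,n_r)$, and showing that the connecting/comparison maps cannot cancel this extremal generator, using that $\underline{\alpha}+(n_1,\ldots,n_r)$ is the top socle degree forced by $I_P$ being a complete intersection.

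For part $(ii)$, I would argue that $\Y$ not ACM forces $\depth(R/I_{\Y}) = r-1$ rather than some smaller value. The general bounds $1 \le \depth(R/I_{\Y}) \le r$ hold for any set of points, and non-ACMness rules out depth $r$, so the claim is that the depth drops by exactly one. The plan is to read depth off the same short exact sequence via the depth lemma (the inequality $\depth(R/I_{\Y}) \ge \min\{\depth(R/I_{\X}), \depth(R/I_P) - 1\}$ after shifting, combined with $\depth(R/I_{\X}) = \depth(R/I_P) = r$ from ACMness of $\X$ and the complete-intersection structure of $I_P$): this gives $\depth(R/I_{\Y}) \ge r-1$, and combined with $\depth(R/I_{\Y}) < r$ we conclude equality.

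The main obstacle I expect is in part $(i)$: verifying that the extremal shift $\underline{\alpha}+(n_1,\ldots,n_r)$ contributed by $(R/I_P)(-\underline{\alpha})$ does \emph{not} get cancelled against a shift in the resolution of $R/I_{\X}$ when forming the mapping cone, so that it genuinely appears in the \emph{minimal} resolution of $R/I_{\X}$. This requires controlling the multigraded Betti numbers at the top of the resolution precisely, and is where the hypothesis that $\Y$ is also ACM (forcing the relevant Tor groups to be concentrated so that no cancellation can occur) must be used decisively.
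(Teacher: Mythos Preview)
Your overall strategy---the short exact sequence $0\to (R/I_P)(-\underline{\alpha})\to R/I_{\X}\to R/I_{\Y}\to 0$ and the mapping cone---is exactly what the paper does. Your treatment of part $(ii)$ via the depth lemma is correct and is a mild variant of the paper's argument (the paper instead observes that the mapping cone gives $\operatorname{pd}(R/I_{\Y})=t+1$ and then applies Auslander--Buchsbaum).

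For part $(i)$, however, your description of the ``main obstacle'' is inverted, and this is worth straightening out. The mapping cone produces a resolution of $R/I_{\Y}$ of length $t+1$ whose top module is $R(-\underline{\alpha}-(n_1,\ldots,n_r))$ and whose module in position $t$ is $\mathbb{F}_t\oplus\mathbb{G}_{t-1}(-\underline{\alpha})$. If $\Y$ is ACM, this resolution is too long, so it is \emph{not} minimal: the top $R(-\underline{\alpha}-(n_1,\ldots,n_r))$ \emph{must} cancel against a summand in position $t$. The point is then to see \emph{where} it cancels: since the Koszul module $\mathbb{G}_{t-1}$ contains no shift equal to $(n_1,\ldots,n_r)$, the cancellation is forced to occur against $\mathbb{F}_t$, and that is precisely the statement $(\alpha_1+n_1,\ldots,\alpha_r+n_r)\in S_t$. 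So cancellation is not the obstacle---it is the mechanism. In your Tor language this is the observation that $\operatorname{Tor}_{t+1}(R/I_{\Y},k)=0$ makes $\operatorname{Tor}_t((R/I_P)(-\underline{\alpha}),k)\to\operatorname{Tor}_t(R/I_{\X},k)$ injective in degree $\underline{\alpha}+(n_1,\ldots,n_r)$, which is a perfectly good way to phrase the same step; but your worry that the extremal shift might ``get cancelled against a shift in the resolution of $R/I_{\X}$'' so as not to ``genuinely appear in the minimal resolution of $R/I_{\X}$'' has the logic backwards (the minimal resolution of $R/I_{\X}$ is the given $\mathbb{F}_\bullet$, and you want the shift to \emph{be} in $\mathbb{F}_t$).
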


\begin{proof}
Because $\X$ is ACM,
by Theorem \ref{uniqueMultsep} $\deg_{\X}(P) = \alpha$
for some $\alpha \in \N^r$.  Let
$F$ be any minimal separator of $P$.  Hence $\deg F = \alpha$,
and by Theorem \ref{teopt},
$I_{P}=(I_{\X}:F)$ and $(I_{\X},F)=I_{\Y}$.   We then have the
short exact sequence
\begin{equation}\label{ses}
0\rightarrow  R/(I_{\X}:F) (-\alpha) = R/ I_{P} (-\alpha)
\stackrel{\times \overline{F}}{\rightarrow} R/I_{\X} \rightarrow
R/(I_{\X},F) = R/I_{\Y} \rightarrow 0.
\end{equation}
By Lemma \ref{residealpt}, the resolution of $R/I_P$ has form
\[
0 \rightarrow R(-n_1,\ldots,-n_r) \rightarrow \mathbb{G}_{t-1}
\rightarrow \cdots \rightarrow \mathbb{G}_1 \rightarrow R
\rightarrow
 R/I_{P} \rightarrow 0.\]
Applying the mapping cone construction to
(\ref{ses}) we get a resolution of $R/I_{\Y}$:
\begin{equation}\label{multires}
\mathcal{H}:~ 0 \rightarrow R(-\alpha_1-n_1,\ldots,-\alpha_r-n_r)
\rightarrow \mathbb{F}_t \oplus
\mathbb{G}_{t-1}(-\underline{\alpha}) \rightarrow \cdots
\end{equation}
\[ \rightarrow \mathbb{F}_2 \oplus \mathbb{G}_{1}(-\underline{\alpha})
\rightarrow \mathbb{F}_1 \oplus R(-\underline{\alpha}) \rightarrow
R \rightarrow R/I_{\Y} \rightarrow 0.\]

If $\Y$ is ACM, then the above resolution cannot be minimal because
it is too long.  So $\mathcal{H} = \mathcal{F} \oplus \mathcal{G}$
where $\mathcal{F}$ is the minimal resolution of $R/I_{\Y}$ and
$\mathcal{G}$ is isomorphic to the trivial complex (see
\cite[Theorem 20.2]{E}).
In particular $R(-\alpha_1-n_1,\ldots,-\alpha_r-n_2)$ must be part
of the trivial complex $\mathcal{G}$, and thus, to obtain a minimal resolution,
$R(-\alpha_1-n_1,\ldots,-\alpha_r-n_r)$ must cancel with
something in  $\mathbb{F}_t \oplus
\mathbb{G}_{t-1}(-\underline{\alpha})$.  Since
$R(-\alpha_1-n_1,\ldots,-\alpha_r-n_r)$ does not appear in
$\mathbb{G}_{t-1}(-\underline{\alpha})$, there exists a shift
$\ui \in S_t$ such that $\ui =
(\alpha_1+n_1,\ldots,\alpha_r+n_r$), thus proving $(i)$.

For $(ii)$ the mapping cone resolution gives a (not
necessarily minimal) resolution of $R/I_{\Y}$ that cannot be
shortened, because otherwise $\Y$ would be ACM.  So, $R/I_{\Y}$ has projective dimension $t+1$.
Now apply the Auslander-Buchsbaum formula.
\end{proof}

We can use the above result to show that some sets of points are not ACM.

\begin{corollary}
Let $\X$ be an ACM scheme in $\pnr$, and $P \in\X$.  If $\deg_{\X}(P) +
(n_1,\ldots,n_r)$ is not a shift of the last syzygy module of $R/\Ix$,
then $\Y = \X \setminus \{P\}$ is not ACM.
\end{corollary}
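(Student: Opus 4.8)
The plan is to recognize this corollary as the contrapositive of Theorem \ref{pnrsepfrombetti}$(i)$, so that essentially no new work is required. First I would record that the hypothesis ``$\X$ is ACM'' places us exactly in the setting of that theorem: by Theorem \ref{uniqueMultsep} we have $|\deg_{\X}(P)| = 1$, so $\deg_{\X}(P) = \alpha$ for a single $\alpha = (\alpha_1,\ldots,\alpha_r) \in \N^r$, and the expression $\deg_{\X}(P) + (n_1,\ldots,n_r)$ refers to the single vector $(\alpha_1 + n_1, \ldots, \alpha_r + n_r)$. I would also note that ``the last syzygy module of $R/\Ix$'' is precisely the module $\F_t$ appearing in the theorem's resolution, whose shifts form the set $S_t$: since $\X$ is ACM the Auslander--Buchsbaum formula gives $\operatorname{pd}(R/\Ix) = (t + r) - r = t$, where $t = \sum_{i=1}^r n_i$ and $t + r$ is the number of variables, so the minimal resolution indeed terminates at $\F_t$.

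With this dictionary in place, the argument is a one-line contrapositive. Arguing by contradiction, I would suppose that $\Y = \X \setminus \{P\}$ is ACM. Then $\X$ is ACM, $P \in \X$, $\deg_{\X}(P) = \alpha$, and $\Y$ is ACM, so every hypothesis of Theorem \ref{pnrsepfrombetti}$(i)$ is satisfied. Its conclusion then yields $(\alpha_1 + n_1, \ldots, \alpha_r + n_r) \in S_t$, that is, $\deg_{\X}(P) + (n_1,\ldots,n_r)$ \emph{does} appear as a shift in the last free module of the minimal resolution of $R/\Ix$. This directly contradicts the standing hypothesis that $\deg_{\X}(P) + (n_1,\ldots,n_r)$ is not such a shift. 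Hence $\Y$ cannot be ACM.

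Because the statement is purely a logical reformulation of a result already established, I do not expect any genuine obstacle. The only point requiring a moment of care is the matching of terminology --- confirming that ``the last syzygy module'' and ``a shift in $S_t$'' refer to the same object used in the theorem, and that $\deg_{\X}(P)$ is genuinely a single vector so that the sum $\deg_{\X}(P) + (n_1,\ldots,n_r)$ is well defined --- both of which follow from the ACMness of $\X$ via Theorems \ref{uniqueMultsep} and \ref{pnrsepfrombetti}.
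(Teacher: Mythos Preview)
Your proposal is correct and matches the paper's intent: the corollary is stated without proof immediately after Theorem~\ref{pnrsepfrombetti}, being simply the contrapositive of part~$(i)$. Your additional care in verifying that $\deg_{\X}(P)$ is a single vector (via Theorem~\ref{uniqueMultsep}) and that $\F_t$ is indeed the last module (via Auslander--Buchsbaum) is appropriate but not strictly needed, since both facts are already built into the statement and proof of Theorem~\ref{pnrsepfrombetti}.
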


\begin{example}
The converse of Theorem \ref{pnrsepfrombetti} (i) is false in general
(we will show it is true in $\popo$ in the next section). Let
$P_1,\ldots,P_6$ be six points in general position (that is,
no more than two points on a line, and no five points on a conic)
in $\pr^2$,
and set $Q_{i,j} := P_i \times P_j \in \pr^2 \times \pr^2$.
Consider the following set of 28 points:
\begin{eqnarray*}
\X &=&
\{Q_{1,1},Q_{1,2},Q_{1,3},Q_{1,4},Q_{1,5},Q_{1,6},Q_{2,1},Q_{2,2},Q_{2,3},Q_{2,4},
Q_{2,6}, Q_{3,1},Q_{3,2},Q_{3,5},
Q_{3,6}, \\
&&Q_{4,1},Q_{4,2},Q_{4,5},Q_{4,6},
Q_{5,1},Q_{5,3},Q_{5,6},Q_{6,1},Q_{6,2},Q_{6,3},
Q_{6,4},Q_{6,5},Q_{6,6}\}.
\end{eqnarray*}
Then $\X$ is ACM, since the minimal bigraded resolution has form
\[0 \rightarrow R(-3,-4)^4 \oplus R(-4,-3)^4 \oplus R(-4,-4) \rightarrow
R^{32} \rightarrow R^{38} \rightarrow R^{16} \rightarrow R
\rightarrow R/\Ix \rightarrow 0\] where we have suppressed all
the other bigraded shifts.

We remove the point $Q_{2,2}$ to form the set $\Y = \X \setminus
\{Q_{2,2}\}$. By comparing the Hilbert functions of $\Y$ and
$\X$ (see Remark \ref{hxremark}), we find that $\deg_{\X}(Q_{2,2}) = (2,2)$.  Now
$\deg_{\X}(Q_{2,2}) + (2,2) = (4,4)$ is a shift that appears in
the minimal multigraded resolution of $\Ix$.  However, $\Y$ is
not ACM because $\Y$ is the nonACM set of points of \cite[Example 3.3]{GVT1}.
\end{example}

\begin{example}
In the proof of Theorem \ref{pnrsepfrombetti}, we saw that
$(\ref{multires})$ gives a resolution of $I_{\Y}$. When $\Y = \X
\setminus \{P\}$ is ACM, this resolution is not minimal because
the resolution can be shortened. However, even when we shorten
the resolution by cancelling out
$R(-\alpha-n_1,\ldots,-\alpha_r-n_r)$ with a term in
$\mathbb{F}_{t-1}$, the resulting resolution may still not be
minimal.

For example, let $P_{i,j} := [1:i] \times [1:j] \in \popo$, and
consider the set
\footnotesize
\[\X  =  \{P_{1,1},P_{1,2},P_{1,3},P_{1,4},P_{1,5}, P_{2,1},P_{2,2},P_{2,3},P_{2,4},P_{3,1},
P_{3,2},P_{3,3},P_{3,4},P_{4,1},P_{4,2},P_{4,3}, P_{5,1},P_{5,2}, P_{6,1}\}.
\]
\normalsize
Set $\Y=\X \setminus \{P_{3,4}\}$. We have that
$\deg_{\X}(P_{3,4})=(2,3)$;  in fact, a minimal separator is $F =
L_1L_2R_1R_2R_3$ where $L_i = ix_0 - x_1$ is the degree $(1,0)$ form that
passes through $[1:i]$ and $R_j = jy_0-y_1 $ is the degree $(0,1)$ form that
passes through $[1:j]$ in $R = k[x_0,x_1,y_0,y_1]$. The resolution of $R/\Ix$ is:
\footnotesize
\[0\rightarrow R(-1,-5)\oplus R(-3,-4)\oplus R(-4,-3)\oplus R(-5,-2)\oplus R(-6,-1)
\rightarrow \] \[\rightarrow R(0,-5)\oplus R(-1,-4)\oplus
R(-3,-3)\oplus R(-4,-2)\oplus R(-5,-1)\oplus R(-6,0)\rightarrow R
\rightarrow R/I_{\X}\rightarrow 0\,.\]
\normalsize
The mapping cone
construction gives the resolution:
\footnotesize
\[0 \rightarrow R(-3,-4)\rightarrow R(-3,-4)\oplus R(-1,-5)\oplus
R(-4,-3)\oplus R(-5,-2)\oplus R(-6,-1)\oplus R(-2,-4)\oplus
R(-3,-3)\]
\[ \rightarrow R(0,-5)\oplus R(-1,-4) \oplus R(-4,-2)\oplus
R(-5,-1)\oplus R(-6,0)\oplus R(-3,-3)\oplus
R(-2,-3)\]\[\rightarrow R \rightarrow R/I_{\Y}\rightarrow 0.\]
\normalsize
Since $\Y$ is ACM, the terms $R(-3,-4)$ at the last and second
last step cancel out.  However, the remaining resolution is not
a minimal resolution because the minimal resolution of $\Y$ is
\footnotesize
\[0\rightarrow R(-1,-5)\oplus  R(-4,-3)\oplus R(-5,-2)\oplus
R(-6,-1)\oplus R(-2,-4) \rightarrow \]\[R(0,-5)\oplus R(-1,-4)
\oplus R(-4,-2)\oplus R(-5,-1)\oplus R(-6,0)\oplus
R(-2,-3)\rightarrow R \rightarrow R/I_{\Y}\rightarrow 0.\]
\normalsize
 The
resolution is not minimal in this case because although $I_{\Y} =
(\Ix,F)$,  one of the minimal generators of $\Ix$ is actually a
multiple of $F$.  Precisely, $G = L_1L_2L_3R_1R_2R_3$ is a
minimal generator of degree $(3,3)$ in $\Ix$, and clearly $F =
L_3G$.
\end{example}


\section{Separators of ACM points in $\popo$ and resolutions}

When $\X$ is a set of ACM points in $\popo$, we can improve upon
the results of the last section.  We will show
that the converse of
Theorem \ref{pnrsepfrombetti} $(i)$ holds for points in $\popo$.
Furthermore, we demonstrate that when  $\X$ is ACM, but $\Y = \X \setminus \{P\}$ is not ACM,
then the mapping cone construction used in proof of Theorem \ref{pnrsepfrombetti}
gives a minimal resolution of $I_{\Y}$.
In order to prove these results,
we make use of properties of ACM sets of points in $\popo$
developed in \cite{GVT1,VT1,VT2};  we begin with
a review of this material.

\subsection{ACM sets of points in $\popo$}

If $\X \subseteq \popo$ is a finite set of points, let $\pi_1(\X) = \{P_1,\ldots,P_r\}$,
respectively, $\pi_2(\X) =\{Q_1,\ldots,Q_t\}$, denote the distinct first coordinates,
respectively, second coordinates, of the points $\X$.  Each
point in $\X$ therefore can be written as $P_i \times Q_j$ for some $i$ and $j$;
the corresponding defining ideal is then $I_{P_i \times Q_j} = (L_{P_i},L_{Q_j}) \subseteq
R = k[x_0,x_1,y_0,y_1]$ with $\deg L_{P_i} = (1,0)$ and $\deg L_{Q_j} = (0,1)$.

We can associate to $\X$ a tuple $\lambda = (\lambda_1,\ldots,\lambda_r)$
where $\lambda_i = \#\{P \times Q \in \X ~|~ P = P_i\}$.
After relabeling the points, we can assume that $\lambda_1 \geq \cdots \geq \lambda_r$.
Note that $\lambda$ is then a {\bf partition} of $|\X|$.
Associated to $\lambda$ is another partition  $\lambda^* =
(\lambda_1^*,\ldots,\lambda_{\lambda_1}^*)$, called the {\bf
conjugate} of $\lambda$, where $\lambda^*_i = \#\{\lambda_j \in
\lambda ~|~ \lambda_j \geq i\}$.  When $\X$ is ACM, we can relabel
the points so that $\lambda_j^* = \#\{P \times Q \in \X ~|~ Q = Q_j\}$
(this can be deduced from \cite[Theorem 4.8]{VT2}).
Thus, when $\X$ is an ACM set of points in $\popo$, by relabeling the points
and permuting the lines of degree $(1,0)$ and $(0,1)$, we can always assume
that $\X$ resembles the Ferrer's diagram of the partition $\lambda$.
As an example, the set of points
\begin{center}
\begin{picture}(150,100)(25,0)
\put(0,40){$\X = $} \put(60,10){\line(0,1){70}}
\put(80,10){\line(0,1){70}} \put(100,10){\line(0,1){70}}
\put(120,10){\line(0,1){70}} \put(140,10){\line(0,1){70}}
\put(160,10){\line(0,1){70}}

\put(54,90){$Q_1$} \put(74,90){$Q_2$} \put(94,90){$Q_3$}
\put(114,90){$Q_4$} \put(134,90){$Q_5$} \put(154,90){$Q_6$}

\put(55,15){\line(1,0){115}} \put(55,35){\line(1,0){115}}
\put(55,55){\line(1,0){115}} \put(55,75){\line(1,0){115}}

\put(35,11){$P_4$} \put(35,31){$P_3$} \put(35,51){$P_2$}
\put(35,71){$P_1$}

\put(60,15){\circle*{5}} \put(60,35){\circle*{5}}
\put(60,55){\circle*{5}} \put(60,75){\circle*{5}}

\put(80,55){\circle*{5}} \put(80,35){\circle*{5}}
 \put(80,75){\circle*{5}}

\put(100,35){\circle*{5}} \put(100,55){\circle*{5}}
 \put(100,75){\circle*{5}}

\put(120,75){\circle*{5}} \put(120,55){\circle*{5}}

\put(140,55){\circle*{5}} \put(140,75){\circle*{5}}

\put(160,75){\circle*{5}}
\end{picture}
\end{center}
is an ACM set of points corresponding to $\lambda = (6,5,3,1)$.
For this set of points $\lambda^* = (4,3,3,2,2,1)$;  the first three
in $\lambda^*$ corresponds
to the fact that there are three points which have second
coordinate $Q_2$.

When $\X$ is ACM, some of the algebraic invariants of $\Ix$ can be deduced from $\lambda$.

\begin{theorem}\label{acmprop}
Let $\X$ be an ACM set of points in $\popo$, and let $\lambda = (\lambda_1,\ldots,\lambda_r)$
be the associated partition.
\begin{enumerate}
\item[$(i)$] The minimal $\N^2$-graded resolution of $R/\Ix$ has form
\[0 \rightarrow \bigoplus_{(i,j) \in S_2} R(-i,-j) \rightarrow
\bigoplus_{(i,j) \in S_1} R(-i,-j) \rightarrow R \rightarrow R/\Ix \rightarrow 0\]
where
\begin{eqnarray*}
S_1 &:=& \{(r,0),(0,\lambda_1)\} \cup \{(i-1,\lambda_i) ~|~ \lambda_i - \lambda_{i-1} <0\}, ~\text{and}\\
S_2 &:=& \{(r,\lambda_r)\} \cup \{(i-1,\lambda_{i-1}) ~|~ \lambda_i - \lambda_{i-1} < 0 \}.
\end{eqnarray*}
\item[$(ii)$]  Assume that the points of $\X$
have been relabeled so that $\X$ resembles the Ferrer's diagram of $\lambda$.
Let $\{i_1,\ldots,i_l\} \subseteq \{1,\ldots,r\}$ be the locations of the
``drops'' in $\lambda$, that is,
\[\lambda_1 = \ldots = \lambda_{i_1-1} > \lambda_{i_1} = \ldots = \lambda_{i_2-1} > \lambda_{i_2} = 
\cdots \]
Then a minimal set of generators of $\Ix$ is given by
\[\{L_{P_1}\cdots L_{P_r},L_{Q_1}\cdots L_{Q_{\lambda_1}}\} \cup \{G_1,\ldots,G_l\}\]
where $G_k = L_{P_1}\cdots L_{P_{i_k-1}}L_{Q_1}\cdots L_{Q_{\lambda_{i_k}}}$ for $k = 1,\ldots,l$.
\end{enumerate}
\end{theorem}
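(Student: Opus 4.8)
The plan is to prove (ii) and (i) simultaneously by exhibiting an explicit Hilbert--Burch resolution whose first free module $\F_1$ carries the shifts $S_1$ and the generators listed in (ii), and whose second free module $\F_2$ carries the shifts $S_2$. Throughout I use that $\X$ ACM means $R/\Ix$ is Cohen--Macaulay of codimension two (here $\depth R/\Ix = r = 2 = \dim R/\Ix$). First I would check that the $l+2$ proposed forms lie in $\Ix$: each is a product of the $(1,0)$-- and $(0,1)$--forms $L_{P_i}, L_{Q_j}$, and the Ferrer's shape forces at least one factor to vanish at any $P_a \times Q_b \in \X$ (so $b \le \lambda_a$). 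For instance, for $G_k$ either $a < i_k$, whence $L_{P_a}$ divides $G_k$, or $a \ge i_k$, whence $b \le \lambda_a \le \lambda_{i_k}$ and $L_{Q_b}$ divides $G_k$. Let $J \subseteq \Ix$ denote the ideal they generate.

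Next I would arrange these $l+2$ generators in a chain in which the first degree strictly decreases and the second degree strictly increases, namely $\prod_{i=1}^r L_{P_i}$, then $G_l, G_{l-1},\ldots,G_1$, then $\prod_{j=1}^{\lambda_1} L_{Q_j}$. Two consecutive members share a common factor and differ only by some extra $L_{P_i}$'s on one side and some extra $L_{Q_j}$'s on the other; multiplying each by the factors it is missing produces a Koszul-type syzygy whose bidegree is exactly one element of $S_2$. The boundary pair $(\prod_i L_{P_i},\,G_l)$ yields $(r,\lambda_r)$ (using $\lambda_{i_l}=\lambda_r$), and the remaining $l$ consecutive pairs yield the elements $(i_k-1,\lambda_{i_k-1})$, $k=1,\ldots,l$ (using that the block above the drop at $i_k$ has constant value, so $\lambda_{i_k-1}=\lambda_{i_{k-1}}$). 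Assembling these $l+1$ syzygies columnwise gives a bidiagonal matrix $\Phi$, with two nonzero consecutive entries per column, and a graded complex $0 \to \F_2 \stackrel{\Phi}{\rightarrow} \F_1 \to R$ with $\F_1=\bigoplus_{\ui\in S_1}R(-\ui)$ and $\F_2=\bigoplus_{\ui\in S_2}R(-\ui)$.

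The technical heart, and the step I expect to be the main obstacle, is the bookkeeping showing that the signed maximal minors of this bidiagonal $\Phi$ telescope to exactly the generators of the chain (up to sign), so that the Hilbert--Burch theorem (see \cite{E}) applies. The grade hypothesis it requires is automatic: $J$ contains the two boundary forms $\prod_{i=1}^r L_{P_i}$ and $\prod_{j=1}^{\lambda_1} L_{Q_j}$, which lie in disjoint variable sets and hence form a regular sequence, so $\operatorname{grade} J = 2$. Hilbert--Burch then gives that the complex above is exact and resolves $R/J$, with $R/J$ Cohen--Macaulay of codimension two.

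Finally I would identify $J=\Ix$ and deduce minimality. Localizing at any $P_i\times Q_j\in\X$, the regular sequence $(\prod_i L_{P_i},\,\prod_j L_{Q_j})$ already generates $(L_{P_i},L_{Q_j})=I_{P_i\times Q_j}$, since the remaining linear factors are local units; thus $J$ is locally radical at each point of $\X$. One checks likewise that $V(J)$ contains no grid point outside $\X$ (for $P_a\times Q_b$ with $b>\lambda_a$ an appropriate step form $G_k$ fails to vanish). Being Cohen--Macaulay of codimension two, $R/J$ is unmixed, so $J$ is the radical ideal of $\X$, i.e. $J=\Ix$. Minimality of the resolution is then free: $S_1\cap S_2=\emptyset$ (a drop's two contributions differ in the second coordinate as $\lambda_{i_k}<\lambda_{i_k-1}$, and the remaining comparisons fail already in the first coordinate), so no entry of $\Phi$ is a nonzero constant. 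This yields both (i) and (ii). A viable alternative to the minor bookkeeping is induction on the number of drops $l$: the base case $l=0$ is the full grid, a complete intersection with its Koszul resolution, and the inductive step peels off the bottom block of equal rows via a mapping cone exactly as in the proof of Theorem \ref{pnrsepfrombetti}; there the obstacle migrates to controlling the saturation of the quotient appearing in the associated short exact sequence.
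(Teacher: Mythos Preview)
Your plan is sound and essentially complete; the Hilbert--Burch computation for a bidiagonal $(l+2)\times(l+1)$ matrix is routine (the minor obtained by deleting row $m$ is, up to sign, the product of the ``upper'' entries in columns $\ge m$ and the ``lower'' entries in columns $<m$, which telescopes to the $m$th generator), the grade-two check via the two boundary forms is correct, and your identification $J=\Ix$ via localization plus unmixedness goes through once you observe that $V(J)$ is contained in the grid because both boundary forms lie in $J$, and that for any grid point $P_a\times Q_b\notin\X$ the step form $G_k$ with $k$ maximal subject to $i_k\le a$ does not vanish there.

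However, your route is quite different from the paper's. The paper does not construct the resolution at all: part $(i)$ is simply quoted from \cite[Theorem~5.1]{VT2}, and part $(ii)$ is then deduced from $(i)$ in two lines---one checks that the listed forms vanish on $\X$ (your Ferrer's-diagram argument) and then observes that their bidegrees are exactly the elements of $S_1$, so they must be a minimal generating set. Your approach has the advantage of being self-contained (no appeal to \cite{VT2}) and of exhibiting the first syzygies explicitly, which is structurally informative; the paper's approach is much shorter precisely because it outsources the hard part, and it illustrates the general principle that once the shifts $S_1$ are known, any set of forms in $\Ix$ with those degrees and incomparable bidegrees must minimally generate. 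If you intend your argument to stand alone, you should carry out the maximal-minor bookkeeping rather than flag it as an obstacle; if you are willing to cite \cite{VT2}, the paper's two-line deduction of $(ii)$ from $(i)$ is considerably more efficient.
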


\begin{proof}
The statement $(i)$ is \cite[Theorem 5.1]{VT2}.  For $(ii)$, because $\X$
has been relabeled to resemble a Ferrer's diagram, it is straightforward
to verify that each element of
$\{L_{P_1}\cdots L_{P_r},L_{Q_1}\cdots L_{Q_{\lambda_1}}\} \cup \{G_1,\ldots,G_l\}$
vanishes at all the points of $\X$ and thus belongs to $\Ix$.
To see that these are the minimal generators, it suffices to compare the degrees
of each element with the elements in the set $S_1$ from part $(i)$.
\end{proof}

A set
$\X \subseteq \popo$ satisfies {\bf property $(\star)$} if whenever $P_1 \times Q_1$ and
$P_2 \times Q_2$ are two points in $\X$ with $P_1 \neq P_2$ and $Q_1 \neq Q_2$,
then either $P_1 \times Q_2 \in \X$ or $P_2 \times Q_1 \in \X$ (or both)
are in $\X$.  We then have:

\begin{theorem}[{\cite[Theorem 4.3]{GVT1}}]\label{theorem*}
A finite set of points $\X$ in $\pr^1 \times \pr^1$
is ACM if and only if $\X$ satisfies property $(\star)$.
\end{theorem}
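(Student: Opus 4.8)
The plan is to prove the two implications separately, using as a bridge the combinatorial fact that, for points in $\popo$, both ACMness and property $(\star)$ are equivalent to $\X$ being arrangeable (after relabeling the $P_i$ and $Q_j$) in the staircase shape of a Ferrer's diagram. The forward direction will then be essentially a verification, while the reverse direction splits into a purely combinatorial step and the genuinely algebraic step of producing a depth-$2$ witness.

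For the forward direction, suppose $\X$ is ACM. As recalled before Theorem \ref{acmprop}, the structure theory of \cite{VT2} lets us relabel the points so that $\X$ resembles the Ferrer's diagram of its partition $\lambda = (\lambda_1,\ldots,\lambda_r)$; concretely, $P_i \times Q_j \in \X$ exactly when $j \leq \lambda_i$, where $\lambda_1 \geq \cdots \geq \lambda_r$. I would then verify $(\star)$ directly on this staircase: given $P_a \times Q_b, P_c \times Q_d \in \X$ with $a \neq c$ and $b \neq d$, say $a < c$ so that $\lambda_a \geq \lambda_c$, the inequality $d \leq \lambda_c \leq \lambda_a$ forces the completing corner $P_a \times Q_d$ into $\X$. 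Since membership of points in $\X$ is unaffected by the relabeling of coordinates, $\X$ itself satisfies $(\star)$.

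For the reverse direction, the first step is to show that $(\star)$ already forces the staircase shape. For each first coordinate $P_i$ set $S_i = \{Q : P_i \times Q \in \X\}$, and I claim the $S_i$ are totally ordered by inclusion. Indeed, if $|S_i| \geq |S_j|$ but $S_j \not\subseteq S_i$, a counting argument (comparing $|S_i \setminus S_j|$ with $|S_j \setminus S_i|$) shows both differences are nonempty; choosing $Q_d \in S_j \setminus S_i$ and $Q_b \in S_i \setminus S_j$ yields points $P_i \times Q_b$ and $P_j \times Q_d$ in $\X$ whose two completing corners $P_i \times Q_d$ and $P_j \times Q_b$ are both absent, contradicting $(\star)$. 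Relabeling so that $|S_1| \geq |S_2| \geq \cdots$, and ordering the $Q_j$ accordingly, makes $\X$ resemble a Ferrer's diagram. It then remains to show that a Ferrer's configuration is ACM, i.e.\ that $\depth R/\Ix = 2$. The route I would take is to exhibit a length-two regular sequence on $R/\Ix$: a general form $\theta_1$ of bidegree $(1,1)$ is a nonzerodivisor since it avoids the finitely many points (equivalently, lies in none of the associated primes $(L_{P_i},L_{Q_j})$), and the nested structure of the $S_i$ is what guarantees a second form $\theta_2$ avoiding all associated primes of $R/(\Ix,\theta_1)$. Alternatively, one computes the bigraded Hilbert function of the staircase in closed form and compares it with the function forced by Cohen-Macaulayness.

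The hard part is precisely this last step, namely converting the nested combinatorial shape into the algebraic statement $\depth R/\Ix = 2$. The obstacle is that the existence of the second regular element is not formal: it is essentially the Cohen-Macaulay condition itself, so one cannot avoid using the Ferrer's structure in an essential way. I expect the delicate bookkeeping to lie either in tracking the associated primes of $R/(\Ix,\theta_1)$ after the first quotient (to locate a $\theta_2$ missing all of them), or in carrying out the Hilbert-function computation for the staircase and invoking a depth criterion valid in the $\N^2$-graded setting.
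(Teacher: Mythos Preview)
The paper does not prove Theorem~\ref{theorem*}; it is quoted verbatim from \cite[Theorem~4.3]{GVT1} with no argument given here, so there is no in-paper proof to compare your proposal against.

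As for your outline on its own merits: the overall strategy---reduce both conditions to the Ferrer's (staircase) shape---is the standard one, and your combinatorial step showing that $(\star)$ forces the $S_i$ to be nested is correct. Two cautions, however. First, in the forward direction you invoke the Ferrer's arrangement of an ACM set from \cite{VT2} as a black box; you should check that this result in \cite{VT2} is proved without already using the $(\star)$ characterization, or else you are arguing in a circle. A cleaner self-contained route for ACM $\Rightarrow$ $(\star)$ is the contrapositive: if $(\star)$ fails at $P_i\times Q_j$, you can write down two separators of incomparable bidegree for a suitable point, contradicting Theorem~\ref{uniqueMultsep}. Second, you correctly identify the genuine content as the step ``Ferrer's shape $\Rightarrow$ ACM'' and leave it as a sketch. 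Your regular-sequence idea is workable but, as you note, controlling the associated primes after the first cut is exactly the issue; a more concrete path is to write down the explicit generators from Theorem~\ref{acmprop}(ii), observe that the resulting ideal has a length-two free resolution (e.g.\ via the Hilbert--Burch structure, or by directly building the $2\times(l{+}2)$ matrix of relations), and conclude $\operatorname{pd}(R/I_\X)=2$, hence depth $2$ by Auslander--Buchsbaum.
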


\subsection{Separators and resolutions in $\popo$}
We begin by describing how to compute $\deg_{\X}(P)$ for each point
$P \in \X \subseteq \popo$ when $\X$ is ACM.
A similar result was given by
Marino \cite[Proposition 7.4]{m1}, but using the language of
``left segments''.

\begin{lemma}\label{D}  $(i)$ Let $\{Q_1,\ldots,Q_b\}$ be $b\geq 2$
distinct points in $\pr^1$, and let $P_1$ be any point of $\pr^1$
(we allow the case that $P_1 = Q_i$ for some $i$).  Consider the
set of points
\[\X = \{P_1\times Q_1,P_1 \times Q_2,\ldots,P_1 \times Q_b\} \subseteq \popo.\]
Then $\X$ is ACM, and furthermore, $\deg_{\X}(P_1\times Q_1) =
\{(0,b-1)\}$.

\noindent $(ii)$  Let $\{P_1,\ldots,P_a\}$ be $a\geq 2$ distinct
points in $\pr^1$, and let $Q_1$ be any point of $\pr^1$ (we
allow the case that $Q_1 = P_i$ for some $i$).  Consider the set
of points
\[\X = \{P_1\times Q_1,P_2 \times Q_1,\ldots,P_a \times Q_1\} \subseteq \popo.\]
Then $\X$ is ACM, and furthermore, $\deg_{\X}(P_1\times Q_1) =
\{(a-1,0)\}$.
\end{lemma}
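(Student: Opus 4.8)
The plan is to prove part $(i)$ in full and then deduce part $(ii)$ by interchanging the two factors of $\popo$ (equivalently, swapping the roles of $x_0,x_1$ and $y_0,y_1$, and of the $P_i$ and $Q_j$), since the two statements are mirror images of one another. For part $(i)$, the ACMness is immediate: every point of $\X$ has the same first coordinate $P_1$, so there do not exist two points $P\times Q$ and $P'\times Q'$ in $\X$ with $P\neq P'$. Hence the hypothesis defining property $(\star)$ is vacuously satisfied, and Theorem \ref{theorem*} gives that $\X$ is ACM. In particular, by Theorem \ref{uniqueMultsep} the set $\deg_{\X}(P_1\times Q_1)$ is a singleton, so it suffices to identify a single minimal separator degree.

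I would next write down an explicit separator of $P=P_1\times Q_1$. Take $F=\prod_{j=2}^{b}L_{Q_j}$, the product of the degree $(0,1)$ forms vanishing at $Q_2,\ldots,Q_b$; then $\deg F=(0,b-1)$. Since each $L_{Q_j}$ vanishes at every point whose second coordinate is $Q_j$, the form $F$ vanishes at $P_1\times Q_2,\ldots,P_1\times Q_b$, while $F(P_1\times Q_1)\neq 0$ because the $Q_j$ are distinct and so $L_{Q_j}(Q_1)\neq 0$ for $j\geq 2$. Thus $F$ is a separator of $P$ of degree $(0,b-1)$, which realizes this degree as the value of a genuine separator.

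The heart of the proof is the matching lower bound. The degrees $\ui\prec(0,b-1)$ are exactly the degrees $(0,d)$ with $0\leq d\leq b-2$, because any $(c,d)\preceq(0,b-1)$ forces $c=0$. A form of degree $(0,d)$ involves only $y_0,y_1$, so it is a binary form of degree $d$ and vanishes at most at $d$ points of $\pr^1$; but a separator of $P$ must vanish at the $b-1$ distinct second coordinates $Q_2,\ldots,Q_b$, forcing $d\geq b-1$. Hence no separator of degree $\prec(0,b-1)$ exists, so $(0,b-1)$ is a minimal separator degree, and together with $|\deg_{\X}(P)|=1$ this yields $\deg_{\X}(P)=\{(0,b-1)\}$. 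The step I expect to be the main obstacle is exactly this one: conceptually, one must rule out any separator that attempts to use the $x$-variables, since every point of $\X$ shares the first coordinate and no form depending only on $x$ can distinguish $P$ from the remaining points. The degree count for binary forms disposes of the $y$-only degrees, while the ACM hypothesis (via Theorem \ref{uniqueMultsep}) is what guarantees there is no second, incomparable minimal degree lurking. As a cross-check, one could instead compute the minimal resolution of $R/\Ix$ from the partition $\lambda=(b)$ using Theorem \ref{acmprop}, read off the last shift $(1,b)$, and apply Theorem \ref{pnrsepfrombetti}$(i)$ to the ACM set $\Y=\X\setminus\{P\}$ to recover $\deg_{\X}(P)+(1,1)=(1,b)$.
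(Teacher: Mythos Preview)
Your proof is correct and follows the same structure as the paper's: verify ACMness via property $(\star)$, invoke Theorem \ref{uniqueMultsep} to reduce to a single minimal degree, exhibit $\prod_{j\geq 2} L_{Q_j}$ as a separator of degree $(0,b-1)$, and then establish the lower bound. The only difference is cosmetic: for the lower bound the paper writes out the Hilbert function of $\Y=\X\setminus\{P_1\times Q_1\}$ to conclude $(I_{\Y})_{0,b'}=0$ for $b'\leq b-2$, whereas you argue directly that a nonzero binary form of degree $d$ in $y_0,y_1$ vanishes at no more than $d$ points of $\pr^1$ --- these are the same fact phrased two ways.
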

\begin{proof}
We only prove $(i)$ since the second statement is similar. By
Theorem \ref{theorem*}, we have that $\X$ is ACM, and so by
Theorem \ref{uniqueMultsep}, we have
$\deg_{\X}(P_1 \times Q_1) = \ua$ for some $\ua
\in \N^2$.  If $L_{Q_i}$ denotes the degree $(0,1)$ form that
passes through $Q_i$, then the form $L_{Q_2}L_{Q_3}\cdots
L_{Q_b}$ is a separator of degree $(0,b-1)$, so we must have
$(0,b-1) \succeq \ua$.

If $(0,b-1)\succ \ua$, then there exists a separator $F \neq 0$ with
$\deg F = \ua = (0,b')$ for some $b' < b-1$.  On the other hand,
the bigraded Hilbert function of $\Y = \X \setminus \{P_1 \times Q_1\}$ is
\[
H_{\Y} = \begin{bmatrix}
1 & 2 & 3 & \cdots & b-2 & b-1 & b- 1 & \cdots \\
1 & 2 & 3 & \cdots & b-2 & b-1 & b- 1 & \cdots \\
\vdots & \vdots& \vdots& & \vdots &\vdots & \vdots &\ddots
\end{bmatrix}  \]
(we write $H_{\Y}$ as an infinite matrix
where the $(i,j)$th entry of the matrix equals $H_{\Y}(i,j)$
where the indexing starts at $0$)
which implies that $(I_{\Y})_{i,j} = 0$ for all $(0,b-2) \succeq
(i,j)$. Since $F \in (I_{\Y})_{(0,b')}$ with $b' < b-1$ this means
$F = 0$, a contradiction. So, $\ua = (0,b-1)$, as desired.
\end{proof}

When $\X$ is an ACM set of points in $\popo$,  the degree of every point in $\X$ 
is found by simply counting the points which share the same  first and second
coordinate.

\begin{theorem}\label{degP1xP1}
Let $\X$ be an ACM set of points in $\popo$.  For any $P \times Q
\in \X$ let
\[\X_{P,1} = \{P \times Q, P \times Q_2,\ldots, P\times Q_b\} \subseteq \X\]
be all the points of $\X$ whose first coordinate is $P$, and let
\[\X_{Q,2} = \{P \times Q, P_2 \times Q,\ldots, P_a\times Q\} \subseteq \X\]
be all the points of $\X$ whose second coordinate is $Q$.  Then
\[\deg_{\X}(P\times Q) = \{(|\X_{Q,2}|-1,|\X_{P,1}|-1)\} = \{(a-1,b-1)\}.\]
\end{theorem}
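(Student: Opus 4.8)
The plan is to prove the two containments $\deg_{\X}(P\times Q) \preceq (a-1,b-1)$ and $\deg_{\X}(P\times Q)\succeq (a-1,b-1)$ separately. Since $\X$ is ACM, Theorem \ref{uniqueMultsep} guarantees that $\deg_{\X}(P\times Q) = \{\ua\}$ for a single element $\ua = (\alpha_1,\alpha_2)\in\N^2$, so it suffices to pin down this one element.

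For the upper bound I would exhibit an explicit separator of degree $(a-1,b-1)$. Writing $\X_{P,1} = \{P\times Q, P\times Q_2,\ldots,P\times Q_b\}$ and $\X_{Q,2} = \{P\times Q, P_2\times Q,\ldots,P_a\times Q\}$, set $A = L_{P_2}\cdots L_{P_a}$ of degree $(a-1,0)$ and $B = L_{Q_2}\cdots L_{Q_b}$ of degree $(0,b-1)$, and let $F = AB$. Then $F(P\times Q) = A(P)B(Q)\neq 0$ since each $P_i\neq P$ and each $Q_j\neq Q$. To check that $F$ vanishes at every other point $P'\times Q'\in\X$, I would split into cases: if $P' = P$ then $Q'\in\{Q_2,\ldots,Q_b\}$ so $B$ kills it, and if $Q' = Q$ then $P'\in\{P_2,\ldots,P_a\}$ so $A$ kills it. The decisive case is $P'\neq P$ and $Q'\neq Q$, where I would invoke property $(\star)$: because $\X$ is ACM, Theorem \ref{theorem*} ensures that $P\times Q'\in\X$ or $P'\times Q\in\X$. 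In the first situation $P\times Q'\in\X_{P,1}$ forces $Q' = Q_l$ for some $l$, so $L_{Q'}\mid B$; in the second, $P'\times Q\in\X_{Q,2}$ forces $L_{P'}\mid A$. Either way $F(P'\times Q') = 0$, so $F$ is a separator and $\ua\preceq (a-1,b-1)$.

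For the lower bound I would use a restriction (fiber) argument. Let $F$ be any separator of $P\times Q$, of bidegree $(d_1,d_2)$. Substituting the coordinates of $P$ for $x_0,x_1$ produces a binary form $\widetilde{F}(y_0,y_1)$ of degree $d_2$; since $F(P\times Q)\neq 0$ this form is not identically zero, yet it vanishes at the $b-1$ distinct points $Q_2,\ldots,Q_b$. A nonzero binary form of degree $d_2$ has at most $d_2$ roots, forcing $d_2\geq b-1$. Restricting instead to the fiber $\pr^1\times\{Q\}$ gives $d_1\geq a-1$ by the symmetric computation. Applying this to the minimal separator yields $\ua\succeq (a-1,b-1)$, and combining the two bounds gives $\ua = (a-1,b-1)$, as claimed. (This generalizes Lemma \ref{D}, which is the degenerate case $a=1$ or $b=1$, where $A$ or $B$ is the empty product.)

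I expect the main obstacle to be the off-diagonal subcase in the upper bound, namely verifying that $F = AB$ vanishes at points $P'\times Q'$ sharing neither coordinate with $P\times Q$. This is precisely the step where the ACM hypothesis is indispensable: property $(\star)$ is what guarantees that such a point is covered by one of the two factors, and without it the product $F$ would in general fail to vanish there (consistent with the fact that $|\deg_{\X}(P)|$ may exceed $1$ in the non-ACM case). The restriction argument underlying the lower bound, by contrast, is routine root-counting on $\pr^1$.
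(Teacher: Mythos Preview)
Your proof is correct and the upper bound is essentially identical to the paper's: both construct $F = L_{P_2}\cdots L_{P_a}L_{Q_2}\cdots L_{Q_b}$ and use property~$(\star)$ (via Theorem~\ref{theorem*}) to handle the off-diagonal points.

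The lower bound, however, takes a genuinely different route. The paper argues by contradiction and reduction to Lemma~\ref{D}: if a separator $F'$ had $\alpha_1 < a-1$, then $F'$ would also separate $P\times Q$ from the subset $\X_{Q,2}$, contradicting $\deg_{\X_{Q,2}}(P\times Q)=(a-1,0)$, which was computed in Lemma~\ref{D} via the Hilbert function of $\X_{Q,2}\setminus\{P\times Q\}$. Your fiber-restriction argument is more direct and more elementary: substituting $P$ into the $x$-variables and counting roots of the resulting binary form on $\pr^1$ gives $d_2\geq b-1$ without any Hilbert function input. In effect your argument subsumes Lemma~\ref{D} rather than invoking it, so you could dispense with that lemma entirely. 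The paper's approach has the modest advantage of reusing a statement already on record, but your root-counting is self-contained and arguably cleaner.
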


\begin{proof}
Because $\X$ is ACM, $\deg_{\X}(P\times Q) = \alpha$ for some $\alpha \in \N^2$. Let
$L_{P_i}$ be the degree $(1,0)$ form that passes through $P_i$
for $i=2,\ldots,a$ and let $L_{Q_j}$ be the degree $(0,1)$ form
that passes through $Q_j$ for $j=2,\ldots,b$.  We will show that
$F = L_{P_2}\cdots L_{P_{a}}L_{Q_2}\cdots L_{Q_b}$ is a
minimal separator of $P \times Q$.

By construction, $F(P\times Q) \neq 0$.  Now consider any point
$P' \times Q' \in \X \setminus \{P \times Q\}$. If $P' \in
\{P_2,\ldots,P_a\}$ or $Q' \in \{Q_2,\ldots,Q_b\}$, then $F(P'
\times Q') = 0$.  So, suppose $P' \not\in \{P_2,\ldots,P_a\}$ and
$Q' \not\in \{Q_2,\ldots,Q_b\}$.  Now $\X$ satisfies property $(\star)$
by Theorem \ref{theorem*}.   So,
since $P' \times Q'$ and $P \times Q$ are in $\X$, then either $P'
\times Q \in \X$, in which case $P' \in \{P_2,\ldots,P_a\}$, a
contradiction, or $P \times Q' \in \X$, in which case $Q' \in
\{Q_2,\ldots,Q_b\}$, a contradiction.  Hence $F$ is a separator for
$P$ of degree $(a-1,b-1)$, whence $(a-1,b-1) \succeq \alpha$.

Suppose that $\alpha \prec (a-1,b-1)$.  So, there is a minimal
separator $F'\neq 0$ with $\deg F' = \alpha = (\alpha_1,\alpha_2)$ with
$\alpha_1 < a-1$ or $\alpha_2 < b-1$.  Suppose that $\alpha_1 <
a-1$.  Now $F'$ is also a separator for $P \times Q$ from
$\X_{Q,2}$.  By Lemma \ref{D}, $deg_{\X_{Q,2}}(P\times Q) =
(a-1,0)$.  So, we must have $(a-1,0) \preceq \deg F'$.  But
$\alpha_1 < a-1$, which gives a contradiction.  So, $\alpha_1
\geq a-1$.  A similar argument implies $\alpha_2 \geq b-1$, and
thus $\deg_{\X}(P \times Q) = (a-1,b-1)$.
\end{proof}

\begin{remark} Suppose $\lambda = (\lambda_1,\ldots,\lambda_r)$
is the partition associated to $\X$,
and $\X$ resembles the Ferrer's
diagram of $\lambda$.  If $P_i \times Q_j \in \X$,
then the conclusion of Theorem \ref{degP1xP1} is equivalent to
$\deg_\X (P_i \times Q_j) = (\lambda_j^*-1,\lambda_i-1)$.
\end{remark}

We now prove the converse of Theorem \ref{pnrsepfrombetti}
for ACM points in $\popo$:

\begin{theorem}\label{acm}  Let $\X$ be an ACM set of points in $\popo$,
and suppose that
\[ 0 \rightarrow \mathbb{F}_2 = \bigoplus_{(i,j) \in S_2} R(-i,-j)
\rightarrow \mathbb{F}_1 \rightarrow R \rightarrow R/\Ix
\rightarrow 0 \] is the  minimal $\N^2$-graded free resolution of
$R/I_\X$. Let $P \in \X$ be any point.  Then $\Y = \X \setminus
\{P\}$ is ACM if and only if $\deg_{\X}(P)+(1,1) \in S_2$.
\end{theorem}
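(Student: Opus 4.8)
The forward implication is already in hand: if both $\X$ and $\Y = \X\setminus\{P\}$ are ACM, then Theorem \ref{pnrsepfrombetti}$(i)$, specialized to $\popo$ (where $n_1 = n_2 = 1$ and $t = 2$), gives at once that $\deg_{\X}(P) + (1,1) \in S_2$. So the real content is the converse, and for this I would argue entirely combinatorially, using the partition $\lambda$ attached to the ACM set $\X$ together with the criterion of Theorem \ref{theorem*} that ACMness is equivalent to property $(\star)$.

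To set up, after relabeling assume $\X$ resembles the Ferrer's diagram of $\lambda = (\lambda_1 \geq \cdots \geq \lambda_r)$, and write $P = P_i \times Q_j$. By the Remark following Theorem \ref{degP1xP1} we have $\deg_{\X}(P) + (1,1) = (\lambda_j^*, \lambda_i)$, so the hypothesis reads $(\lambda_j^*, \lambda_i) \in S_2$. The first key step is to unwind this using the description $S_2 = \{(r,\lambda_r)\} \cup \{(m, \lambda_m) \mid \lambda_{m+1} < \lambda_m\}$ from Theorem \ref{acmprop}$(i)$. Here I would use that $m := \lambda_j^*$ is exactly the number of points in column $Q_j$; since $\X$ is a Ferrer's diagram, column $Q_j$ occupies precisely rows $1,\ldots,m$, so that $\lambda_{m+1} < j \leq \lambda_m$. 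In particular $m$ is automatically a ``drop row'', so that $(\lambda_j^*, \lambda_i) \in S_2$ if and only if $\lambda_{\lambda_j^*} = \lambda_i$.

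It then remains to show that $\lambda_{\lambda_j^*} = \lambda_i$ is equivalent to $\Y$ being ACM. Since $P_i \times Q_j \in \X$ forces $i \leq \lambda_j^* = m$, monotonicity of $\lambda$ gives $\lambda_m \leq \lambda_i$ always, so the condition $\lambda_m = \lambda_i$ just says that rows $i, i+1, \ldots, m$ all have equal length. To connect this with property $(\star)$, I would observe that the only way $\Y$ can fail $(\star)$ is through a pair consisting of a point $P_i \times Q_b$ in the row of $P$ and a point $P_a \times Q_j$ in the column of $P$ whose fourth corner $P_a \times Q_b$ is missing from $\X$ — because $\X$ itself satisfies $(\star)$, the removed point $P$ is necessarily the unique completing point of any such pair. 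Reading this off the Ferrer's shape, such a pair exists exactly when some row $a$ with $i < a \leq m$ is strictly shorter than row $i$ (take $b = \lambda_i > \lambda_m \geq j$ when $\lambda_m < \lambda_i$), that is, exactly when $\lambda_m < \lambda_i$. Hence $\Y$ satisfies $(\star)$ if and only if $\lambda_m = \lambda_i$, and Theorem \ref{theorem*} closes the argument.

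The main obstacle I anticipate is the bookkeeping in this last step: correctly enumerating the potential violations of $(\star)$ after deleting $P$, in particular remembering the column-direction pairs and not only the row-direction ones, and handling the boundary case $j = \lambda_i$, where the column-height constraint $j \leq \lambda_m \leq \lambda_i$ collapses to $\lambda_m = \lambda_i$ and so automatically lands in $S_2$. Everything else is a routine translation among the resolution data $S_2$, the degree formula of Theorem \ref{degP1xP1}, and the Ferrer's diagram of $\lambda$.
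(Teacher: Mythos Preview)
Your proposal is correct and follows essentially the same route as the paper: both arguments relabel $\X$ as a Ferrer's diagram of $\lambda$, invoke the Remark after Theorem~\ref{degP1xP1} to write $\deg_{\X}(P)+(1,1)=(\lambda_j^*,\lambda_i)$, use the description of $S_2$ from Theorem~\ref{acmprop}, and then test property~$(\star)$ for $\Y$ via Theorem~\ref{theorem*} by looking for a point $P_a\times Q_j$ with $a>i$ whose row is strictly shorter than row $i$.

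The one organizational difference is that you isolate up front the observation that $(m,\lambda_m)\in S_2$ automatically for $m=\lambda_j^*$ (since either $m=r$ or $\lambda_{m+1}<j\le\lambda_m$), so membership in $S_2$ collapses to the single equation $\lambda_{\lambda_j^*}=\lambda_i$; the paper instead splits into the complete-intersection case $\lambda_1=\lambda_r$ and the general case, and in the latter argues by contradiction through the same row-length comparison. Your packaging is a bit cleaner, but the underlying ideas coincide.
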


\begin{proof} In light of Theorem \ref{pnrsepfrombetti} (i), it
suffices to prove the converse statement.   As noted above,
we can assume that $\X$ resembles a Ferrer's diagram of some
partition $\lambda$.  By Theorem \ref{acmprop}
the shifts in $\mathbb{F}_2$ are
$S_2 = \{(r,\lambda_r)\} \cup \{(i-1,\lambda_{i-1})
 ~|~ \lambda_i - \lambda_{i-1} < 0 \}.$
We consider two cases: (1) $\lambda =
(\lambda_1,\ldots,\lambda_1)$ and (2) $\lambda =
(\lambda_1,\ldots,\lambda_r)$ with $\lambda_1 > \lambda_r$.

In the first case, $S_2 = \{(r,\lambda_r)\} =
\{(r,\lambda_1)\}$.  Moreover, $\lambda =
(\lambda_1,\ldots,\lambda_1)$ if and only if $\X$ is a complete
intersection of type $(\lambda_1,r)$, that is, $\X$ is a
grid of $\lambda_1 \times r$ points.  By Lemma \ref{degP1xP1},
each point $P \in \X$ has $\deg_{\X}(P) = (r-1,\lambda_1-1)$.
So, $\deg_{\X}(P) + (1,1) \in S_2$. But because $\X$ is a
complete intersection, $\Y = \X \setminus \{P\}$ is ACM because
$\Y$ still satisfies property $(\star)$.

For the second case, suppose $\deg_\X (P_i \times Q_j) + (1,1)
= (\lambda_j^*,\lambda_i) \in S_2$.  So,
either $(\lambda_j^*,\lambda_i) = (r,\lambda_r)$,
or there exists an $i' > i$ such that
$(\lambda_j^*,\lambda_i) = (i'-1,\lambda_{i'-1})$.
For the second statement,
because  $\lambda_i \geq \lambda_{i+1} \geq \cdots$ there exists some
$i'> i$ such that $\lambda_i = \cdots = \lambda_{i'-1} >
\lambda_{i'}$.
Since $(i'-1,\lambda_{i'-1})$ is the only tuple in $S_2$ whose
second coordinate is $\lambda_i = \lambda_{i'-1}$, then
$(i'-1,\lambda_{i'-1})$ is the element in $S_2$ that is equal to
$\deg_{\X}(P_i \times Q_j)+(1,1)$ and $\lambda_j^* = i'-1$. So,
if $\lambda_j^* = r$,
\[\X_{Q_j,2} = \{P_1, \times Q_j, P_2 \times Q_j,\ldots, P_i \times Q_j, \ldots,
P_{r}\times Q_j\}\]
is the set of all points in $\X$ with second coordinate $Q_j$,
and if $\lambda_j^* = i'-1$, then
\[\X_{Q_j,2} = \{P_1, \times Q_j, P_2 \times Q_j,\ldots, P_i \times Q_j, \ldots,
P_{i'-1}\times Q_j\} \subseteq \X\] is the set of all the points
in $\X$ with second coordinate is $Q_j$.

Suppose, for a contradiction, that $\Y = \X \setminus\{P_i \times Q_j\}$ is not ACM.
Thus  $\Y$ does not satisfy property $(\star)$.
We do the case that $\lambda_i = \lambda_{i'-1}$ first.
Because we have only removed the point $P_i \times Q_j$, this
means that there exist points $P_i \times Q'$ and $P' \times Q_j$
in $\Y$ with $P' \times Q' \not\in \Y$ (and clearly $P_i \times
Q_j \not\in \Y$).  Because $\X$ has the shape of the Ferrer's
diagram, we can take $P'=P_{c}$ with $c > i$.  To see this,
note that the Ferrer's shape implies that if $P_i \times Q' \in \X$,
then so are all the points $P_k \times Q'$ with $k < i$.  Thus,
because
$P_i \times Q'$ is in $\X$, but $P' \times Q' \not\in \X$, we
must have $P' = P_c$ with $c > i$.

On the other hand, again from the Ferrer's shape we
must also have
$\lambda_i > \lambda_c$, because $P_i \times Q' \in \X$,
but $P_c \times Q' \not\in \X$.
 But $P_c \times Q_j \in \X_{Q_j,2}$, and
thus $i < c \leq i'-1$.  But then we have $\lambda_i = \cdots=\lambda_c = \cdots =
\lambda_{i'-1}$, and thus $\lambda_c = \lambda_i < \lambda_i$, a
contradiction.  So, $\Y$ must have property $(\star)$, and must be
ACM by Theorem \ref{theorem*}

In the case that $\lambda_i = \lambda_r$, a similar argument would
show that there exists a point $P_c \times Q' \in \X$ with $c > r$
and $\lambda_c < \lambda_r$.  But this is not
possible since $\lambda_r$ is the smallest entry of $\lambda$.
So, again $\Y$ must have property $(\star)$, and must be ACM.
\end{proof}

\begin{example}
We illustrate the above ideas with the following set of
points in $\popo$:

\begin{center}
\begin{picture}(150,100)(25,0)
\put(0,40){$\X = $} \put(60,10){\line(0,1){70}}
\put(80,10){\line(0,1){70}} \put(100,10){\line(0,1){70}}
\put(120,10){\line(0,1){70}} \put(140,10){\line(0,1){70}}
\put(160,10){\line(0,1){70}}

\put(54,90){$Q_1$} \put(74,90){$Q_2$} \put(94,90){$Q_3$}
\put(114,90){$Q_4$} \put(134,90){$Q_5$} \put(154,90){$Q_6$}

\put(50,5){\dashbox{2}(20,80){}} \put(70,25){\dashbox{2}(40,60){}}
\put(110,45){\dashbox{2}(40,40){}}
\put(150,65){\dashbox{2}(20,20){}}
\put(50,25){\dashbox{2}(60,20){}}
\put(50,65){\dashbox{2}(100,20){}} \put(55,15){\line(1,0){115}}
\put(55,35){\line(1,0){115}} \put(55,55){\line(1,0){115}}
\put(55,75){\line(1,0){115}}

\put(35,11){$P_4$} \put(35,31){$P_3$} \put(35,51){$P_2$}
\put(35,71){$P_1$}

\put(60,15){\circle*{5}} \put(60,35){\circle*{5}}
\put(60,55){\circle*{5}} \put(60,75){\circle*{5}}

\put(80,55){\circle*{5}} \put(80,35){\circle*{5}}
 \put(80,75){\circle*{5}}

\put(100,35){\circle*{5}} \put(100,55){\circle*{5}}
 \put(100,75){\circle*{5}}

\put(120,75){\circle*{5}} \put(120,55){\circle*{5}}

\put(140,55){\circle*{5}} \put(140,75){\circle*{5}}

\put(160,75){\circle*{5}}
\end{picture}
\end{center}

\noindent The associated partition is $\lambda = (6,5,3,1)$ and
$\lambda^* = (4,3,3,2,2,1)$.

We have divided the set of points into a series of boxes (the
dashed boxes).  Every point in the same box has the same degree.
For example $P_1 \times Q_2$ and $P_1 \times Q_3$ both have
degree $(2,5)$.  For this set of points, the shifts that appear
at the end of the minimal resolution of $R/\Ix$ are:
\[S_2 = \{(4,1),(1,6),(2,5),(3,3)\}.\]
The points in $\X$ in the ``outside'' boxes, i.e., the box
containing $P_4 \times Q_1$, the box containing $P_3 \times Q_2$
and $P_3 \times Q_3$, the box containing $P_2 \times Q_4$ and
$P_2 \times Q_5$, and the box containing $P_1 \times Q_6$, all
have the property that $\deg_{\X}(P_i \times Q_j) +(1,1) \in
S_2$.  For example, $\deg_\X(P_3 \times Q_3) + (1,1) = (3,3)$.
If we remove any point from these boxes, the resulting set of
points will still be ACM.  On the other hand, if we remove any
point from an ``inside'' box, the resulting set of points will
not be ACM.  For example, if $P_2 \times Q_3$ is removed, then $\X
\setminus \{P_2 \times Q_3\}$ no longer satisfies property
$(\star)$, and thus is not ACM.
\end{example}

Let $\nu(I)$ denote the minimal number of generators of a multihomogeneous ideal $I$.

\begin{lemma}\label{numgens}
Suppose that $\X \subseteq \popo$ is ACM, but $\Y = \X \setminus \{P\}$
is not ACM for some $P \in \X$.  Then $\nu(I_{\Y}) = \nu(\Ix)+1$.
\end{lemma}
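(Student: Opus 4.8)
The plan is to compute $\nu(I_{\Y})$ from the short exact sequence linking $R/\Ix$ and $R/I_{\Y}$, and to show that the non-ACM hypothesis forces the comparison map on first syzygies to vanish, so that adjoining the separator of $P$ to $\Ix$ costs exactly one extra generator. Since $\X$ is ACM, Theorem \ref{uniqueMultsep} gives $\deg_{\X}(P) = \alpha$ for a single $\alpha = (\alpha_1,\alpha_2) \in \N^2$; fix a minimal separator $F$ of $P$, so $\deg F = \alpha$. By Theorem \ref{teopt} we have $(\Ix : F) = I_P$ and $I_{\Y} = (\Ix, F)$, giving
\[
0 \rightarrow R/I_P(-\alpha) \stackrel{\times \overline{F}}{\rightarrow} R/\Ix \rightarrow R/I_{\Y} \rightarrow 0.
\]
Applying $-\otimes_R k$ and passing to the long exact sequence of $\operatorname{Tor}^R_{\bullet}(-,k)$, I would use $\nu(I_{\Y}) = \dim_k \operatorname{Tor}^R_1(R/I_{\Y},k)$. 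Because $\times\overline{F}$ raises degree by $\alpha \neq (0,0)$, it induces the zero map on $\operatorname{Tor}_0$, so the connecting homomorphism surjects onto $\operatorname{Tor}^R_0(R/I_P(-\alpha),k) = k(-\alpha)$; the rest of the sequence then yields
\[
\nu(I_{\Y}) = \nu(\Ix) + 1 - \operatorname{rank} \phi, \qquad \phi : \operatorname{Tor}^R_1(R/I_P(-\alpha),k) \rightarrow \operatorname{Tor}^R_1(R/\Ix,k).
\]
Thus the whole statement reduces to proving $\phi = 0$. (Equivalently, this is the assertion that the $\mathbb{G}_1(-\alpha)$-summand of the mapping cone (\ref{multires}) contributes no unit entry in the differential landing in $\mathbb{F}_1$.)

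Since $I_P = (L_{P_i},L_{Q_j})$ is a complete intersection of forms of degrees $(1,0)$ and $(0,1)$, the space $\operatorname{Tor}^R_1(R/I_P(-\alpha),k)$ is concentrated in the two multidegrees $\alpha+(1,0)$ and $\alpha+(0,1)$, whereas $\operatorname{Tor}^R_1(R/\Ix,k)$ is concentrated in the multidegrees of the set $S_1$ of Theorem \ref{acmprop}. As $\phi$ is a degree-$0$ map of $\N^2$-graded vector spaces, it suffices to show that neither $\alpha+(1,0)$ nor $\alpha+(0,1)$ belongs to $S_1$. Relabelling so that $\X$ resembles the Ferrer's diagram of its partition $\lambda$ and writing $P = P_i \times Q_j$, the remark following Theorem \ref{degP1xP1} gives $\alpha = (\lambda^*_j - 1, \lambda_i - 1)$, so the two multidegrees to be excluded are $(\lambda^*_j, \lambda_i - 1)$ and $(\lambda^*_j - 1, \lambda_i)$.

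The remaining, and principal, step is the combinatorial check that these two tuples avoid $S_1$, and this is where I expect the real work to lie. First I would translate the hypothesis: by Theorem \ref{acm}, $\Y$ being non-ACM is equivalent to $\alpha+(1,1) = (\lambda^*_j,\lambda_i) \notin S_2$, which by the formula for $S_2$ in Theorem \ref{acmprop} unwinds to the strict inequality $\lambda^*_j > \lambda^*_{\lambda_i}$ (equivalently, some part of $\lambda$ satisfies $j \leq \lambda_k < \lambda_i$); in particular $\lambda_i \geq 2$. Rewriting $S_1$ and $S_2$ in terms of the distinct part-values $v_1 > \cdots > v_m$ of $\lambda$ and their conjugate values, each case becomes a one-line inequality: if $(\lambda^*_j - 1,\lambda_i) \in S_1$ then $\lambda^*_j = \lambda^*_v + 1$ for the value $v$ immediately above $\lambda_i$, contradicting $\lambda^*_j > \lambda^*_{\lambda_i} \geq \lambda^*_v + 1$; and if $(\lambda^*_j,\lambda_i - 1) \in S_1$ then necessarily $\lambda_i - 1$ is the next distinct value below $\lambda_i$ and $\lambda^*_j = \lambda^*_{\lambda_i}$, again contradicting the strict inequality. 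With the conventions $\lambda^*_v = 0$ above the largest part, the boundary case $\lambda_i = \lambda_1$ is subsumed, and $\lambda_i = 1$ does not occur. Hence $\phi = 0$ and $\nu(I_{\Y}) = \nu(\Ix) + 1$.
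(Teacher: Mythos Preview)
Your argument is correct and takes a genuinely different route from the paper's. The paper argues directly with the explicit minimal generators of $\Ix$ furnished by Theorem \ref{acmprop}(ii): after obtaining the easy bound $\nu(I_{\Y})\leq\nu(\Ix)+1$, it shows that if equality failed then some minimal generator $G$ of $\Ix$ would have to be a multiple of the separator $F$ (using that the generator degrees are pairwise incomparable), and then rules this out by an explicit divisibility check against the product form of $F$ and the $G_k$'s, drawing the needed inequalities from the failure of property~$(\star)$ for $\Y$.

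Your approach replaces this hands-on computation with the long exact $\operatorname{Tor}$ sequence, reducing everything to the vanishing of the comparison map $\phi$ in two specific bidegrees, and then feeds in Theorem~\ref{acm} to convert ``$\Y$ not ACM'' into the single strict inequality $\lambda_j^{*}>\lambda_{\lambda_i}^{*}$. This is cleaner conceptually: once the problem is phrased as ``$\alpha+e_1,\alpha+e_2\notin S_1$'', the combinatorics is a two-line check against the staircase description of $S_1$. The trade-off is that you are invoking Theorem~\ref{acm}, itself a nontrivial result (though proved earlier and independently of this lemma), whereas the paper's argument stays closer to first principles via property~$(\star)$. Your framework also makes transparent why the lemma is exactly the statement that the mapping cone is minimal at the $\mathbb{F}_1$ stage, which dovetails nicely with the use of the lemma in Theorem~\ref{nonacm}. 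The terse endgame (the two ``one-line inequalities'') is correct on inspection---in each case uniqueness of the element of $S_1$ with the given second (resp.\ first) coordinate forces $t=s$, contradicting $t>s$---but would benefit from one extra sentence making that uniqueness explicit.
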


\begin{proof}
Because $\X$ is ACM, $|\deg_{\X}(P)| = 1$.  Let $F$ be a minimal
separator of $\deg_{\X}(P)$.  By Theorem \ref{teopt} we have $I_{\Y} = (\Ix,F)$,
and hence $\nu(I_{\Y}) \leq \nu(\Ix) +1$.

Let $\lambda = (\lambda_1,\ldots,\lambda_r)$ be the partition associated to $\X$,
and relabel $\X$ so $\X$ resembles the Ferrer's diagram of $\lambda$.
Note that $\lambda_1 > \lambda_r$, because if $\lambda_1 = \lambda_r$, then
$\X$ would be a complete intersection, in which case $\X \setminus \{P\}$ is ACM
for all $P \in \X$.

Assume that $P = P_i \times Q_j$.  Because $\Y = \X \setminus \{P_i \times Q_j\}$
is not ACM, the set $\Y$ does not satisfy $(\star)$.  In particular,
the points $P_i \times Q_{\lambda_i}$ with $\lambda_i > j$
and $P_{\lambda_j^*} \times Q_j$ with $\lambda_j^* > i$ are in both $\Y$,
but neither $P_i \times Q_j$ or
$P_{\lambda_j^*} \times Q_{\lambda_i}$ are in $\Y$.  Note that this also
implies that $P_{\lambda_j^*} \times Q_{\lambda_i} \not\in \X$.  By
Theorem \ref{degP1xP1}, we have $\deg_{\X}(P_i\times Q_j) = (\lambda_j^*-1,\lambda_i-1)$;
in particular, a minimal separator of this point is
\[F = L_{P_1}\cdots\hat{L}_{P_i}\cdots L_{P_{\lambda_j^*}}L_{Q_1}\cdots \hat{L}_{Q_j}
\cdots L_{Q_{\lambda_i}}\]
where $~~\hat{\space}~~$ denotes the term is omitted.

By Theorem \ref{acmprop}, if  $\{i_1,\ldots,i_l\} \subseteq \{1,\ldots,r\}$ are the locations of the
``drops'' in $\lambda$, then the minimal generators of $\Ix$ are
\[\mathcal{M} = \{L_{P_1}\cdots L_{P_r},L_{Q_1}\cdots L_{Q_{\lambda_1}}\} \cup \{G_1,\ldots,G_l\}\]
where $G_k = L_{P_1}\cdots L_{P_{i_k-1}}L_{Q_1}\cdots L_{Q_{\lambda_{i_k}}}$ for $k = 1,\ldots,l$.
If $\nu(I_{\Y}) < \nu(\Ix)+1$, then because $F \not\in\Ix$, there exists a minimal generator
$G$ such that
\[G = HF + \sum_{F_i \in \mathcal{M}\setminus\{G\}} H_iF_i.\]
Now, by degree considerations, if $H_i \neq 0$, then we must have $\deg G \succeq \deg F_i$.
But by Theorem \ref{acmprop} (i), for any two minimal generators $F_i,F_j$ of $\Ix$,
we have $\deg F_i \not\succeq \deg F_j$ and $\deg F_j \not\succeq \deg F_i$.
Thus, if $\nu(I_{\Y}) < \nu(\Ix)+1$, in the sum above we have $H_i = 0$ for all $i$, and
thus there must be a generator $G$ such that $G = HF$, and hence
$\deg G \succeq \deg F$.

Again, by degree considerations, since $\deg F \succeq (1,1)$,
$G \neq L_{P_1}\cdots L_{P_r}$ or $L_{Q_1}\cdots L_{Q_{\lambda_1}}$.
So, consider $i_k \in \{i_1,\ldots,i_l\}$.
If $i_k \leq i$, then $\deg G_k = (i_k-1,\lambda_{i_k})$.  But then $i_k \leq i < \lambda_j^*$,
and thus $\deg G_k \not\succeq (\lambda_j^*,\lambda_i) = \deg F$.
On the other hand if $i_k > \lambda_j^*$, then
$\lambda_{i_k} < j < \lambda_i$.  But
then
$\deg G_k = (i_k-1,\lambda_{i_k}) \not\succeq (\lambda_j^*-1,\lambda_{i}-1) = \deg F$
since $\lambda_{i_k} < \lambda_i-1$.

So, it remains to consider the case that $i_k \in \{i_1,\ldots,i_l\}$
and $i< i_k \leq \lambda_j^*$.  We then have
\[ G_k = L_{P_1}\cdots L_{P_{i_k-1}}L_{Q_1}\ldots,L_{Q_{\lambda_{i_k}}} =
H L_{P_1}\cdots\hat{L}_{P_i}\cdots L_{P_{\lambda_j^*}}L_{Q_1}\cdots \hat{L}_{Q_j}
\cdots L_{Q_{\lambda_i}} = HF.\]
But since $i < i_k $, we have $\lambda_i \geq \lambda_{i_k-1} > \lambda_{i_k}$.
Thus $F$ cannot divide $G_k$ since $L_{Q_{\lambda_i}}$ divides $F$, but not $G_k$.
We have thus shown that for every generator of $\Ix$, $F$ cannot
divide it,  thus providing the desired contradiction.
\end{proof}

By Theorem \ref{pnrsepfrombetti} (ii), if $P \in \X \subseteq \pnr$ is chosen
so that $\Y = \X \setminus \{P\}$ is not ACM, then
$\depth(R/I_{\Y}) = r-1$.  When $\X \subseteq \popo$, we can
prove a stronger result.

\begin{theorem}\label{nonacm}
 Let $\X$ be an ACM set of points in $\popo$,
and suppose that
\[ 0 \rightarrow \mathbb{F}_2
\rightarrow \mathbb{F}_1 \rightarrow R \rightarrow R/\Ix
\rightarrow 0 \] is the  minimal $\N^2$-graded free resolution of
$R/I_\X$. Let $P \in \X$ be a point with $\deg_{\X}(P) = (\alpha_1,\alpha_2)$.
Then $\Y=\X \setminus \{P\}$ is not ACM if and only
if it has a minimal $\N^2$-graded free resolution of type
\begin{equation}\label{resY} 0 \rightarrow R(-\alpha_1-1,-\alpha_2-1)
 \rightarrow
\begin{array}{c}
\mathbb{F}_2 \\
\oplus \\
R(-\alpha_1-1,-\alpha_2) \\
\oplus \\
R(-\alpha_1,-\alpha_2-1)
\end{array}
\longrightarrow
\begin{array}{c}
\mathbb{F}_1 \\
\oplus \\
R(-\alpha_{1},-\alpha_{2})
\end{array}
\rightarrow R\rightarrow R/I_{\Y}\rightarrow
0
\end{equation}
\end{theorem}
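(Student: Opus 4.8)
The plan is to show that the mapping cone resolution constructed in the proof of Theorem~\ref{pnrsepfrombetti} is already minimal precisely when $\Y$ fails to be ACM, and conversely. Specializing that construction to $\popo$ (so $r=2$ and $t=2$), Lemma~\ref{residealpt} together with the Koszul complex of the complete intersection $I_P$ gives $0\to R(-1,-1)\to \mathbb{G}_1\to R\to R/I_P\to 0$ with $\mathbb{G}_1=R(-1,0)\oplus R(0,-1)$. Feeding this into the mapping cone for the sequence~(\ref{ses}), for a minimal separator $F$ of $P$ (with $\deg F=(\alpha_1,\alpha_2)$ since $\X$ is ACM), yields precisely the complex~(\ref{resY}), namely
\[ 0 \to R(-\alpha_1-1,-\alpha_2-1) \to \mathbb{F}_2\oplus R(-\alpha_1-1,-\alpha_2)\oplus R(-\alpha_1,-\alpha_2-1) \to \mathbb{F}_1\oplus R(-\alpha_1,-\alpha_2) \to R \to R/I_{\Y}\to 0, \]
but not a priori minimal. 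The whole theorem then reduces to deciding when this complex can be shortened.

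For the direction assuming $\Y$ is not ACM, I would rule out every possible cancellation. The internal differentials of the two resolutions being spliced are minimal by construction, and the Koszul differentials of $\mathbb{G}_\bullet(-\underline{\alpha})$ have only linear entries; hence the only potentially unit entries sit in the comparison maps $\phi_0,\phi_1,\phi_2$ lifting $\times\overline{F}$. First, since $\Y$ is not ACM, Theorem~\ref{pnrsepfrombetti}~$(ii)$ gives $\depth(R/I_{\Y})=1$, so by Auslander--Buchsbaum $\operatorname{pd}(R/I_{\Y})=3$; as the homological degree $3$ term of the mapping cone is the single summand $R(-\alpha_1-1,-\alpha_2-1)$, it cannot cancel, which excludes any cancellation between degrees $2$ and $3$ (consistent with $(\alpha_1+1,\alpha_2+1)\notin S_2$ furnished by Theorem~\ref{acm}). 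Second, and this is the crux, Lemma~\ref{numgens} gives $\nu(I_{\Y})=\nu(I_{\X})+1=|S_1|+1$; since the first free module of the mapping cone is $\mathbb{F}_1\oplus R(-\alpha_1,-\alpha_2)$ of rank exactly $|S_1|+1$ and cancellation can only decrease this rank, no summand in homological degree $1$ cancels, so $\phi_1$ has no unit entry and there is no cancellation between degrees $1$ and $2$. Finally, the degree $0$ term $R$ survives because the maps into it are either entries of the minimal differential of $R/I_{\X}$ or $\times F$ (degree $\underline{\alpha}\neq\underline{0}$), none of which are units. With every adjacent cancellation excluded, the mapping cone is already minimal, giving~(\ref{resY}).

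The converse is immediate: if $R/I_{\Y}$ has a minimal resolution of the shape~(\ref{resY}), then its last nonzero free module $R(-\alpha_1-1,-\alpha_2-1)$ sits in homological degree $3$, so $\operatorname{pd}(R/I_{\Y})=3$; Auslander--Buchsbaum then forces $\depth(R/I_{\Y})=4-3=1<2=\dim R/I_{\Y}$, whence $\Y$ is not ACM.

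I expect the only genuine obstacle to be showing that $\phi_1$ contributes no unit entry, i.e.\ that the first syzygy module does not shrink. A direct analysis of the lifted comparison map would be delicate, so the whole point of the argument is to bypass it: matching the rank $|S_1|+1$ of the mapping cone in homological degree $1$ against $\nu(I_{\Y})$ from Lemma~\ref{numgens} pins down that stage of the resolution without ever computing $\phi_1$.
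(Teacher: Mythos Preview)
Your proof is correct and follows essentially the same route as the paper: construct the mapping cone resolution~(\ref{resY}) from the short exact sequence~(\ref{ses}), use the depth/projective dimension argument (equivalently, Theorem~\ref{pnrsepfrombetti}~$(ii)$) to rule out shortening at the tail, and invoke Lemma~\ref{numgens} to exclude cancellation between homological degrees $1$ and $2$. The paper's version is terser---it observes that once the resolution cannot be shortened, any remaining cancellation must occur between $\mathbb{F}_1\oplus R(-\alpha_1,-\alpha_2)$ and the middle term, which would force $\nu(I_{\Y})\le\nu(I_{\X})$---but your more explicit stage-by-stage analysis of the comparison maps arrives at the same contradiction via the same lemma.
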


\begin{proof}
If $\Y$ is a set of points in $\popo$ with a minimal bigraded
free resolution of type (\ref{resY}), since it has length $3$,
then $\Y$ is not ACM.
So, suppose that $\Y = \X \setminus \{P\}$ is not ACM.
As shown in the proof of Theorem \ref{pnrsepfrombetti}, $R/I_{\Y}$
has resolution of type (\ref{resY}).  It suffices to
show that the resolution is not minimal.

We first note that the resolution cannot be shortened since
$\depth(R/I_{\Y}) < 2$.  Thus, if the resolution of (\ref{resY})
were not minimal, some shift in $\mathbb{F}_1 \oplus R(-\alpha_1,-\alpha_2)$
would have to cancel out with some shift in
$\mathbb{F}_2 \oplus R(-\alpha_1-1,-\alpha_2)
\oplus
R(-\alpha_1,\alpha_2-1)$.  But if there were such a cancellation,
that would imply that $\nu(I_{\Y}) \leq \nu(\Ix)$, contradicting
Lemma \ref{numgens}.  Thus $R/I_{\Y}$ has a minimal resolution
of type (\ref{resY}).
\end{proof}

\begin{example}
Suppose we know $\X \subseteq \pnr$ is not ACM, and in fact, we
know the $\N^r$-graded resolution.  It is tempting to speculate
that the rank of the last syzygy module gives us information
about the minimal number of points one should add to $\X$ to make
the set of points ACM.  Unfortunately, no clear correspondence is
known.   For example, let $P_i\in \pr^1$ for $i=1,\ldots,5$ be
distinct points and let $\X$ be the following set of points of
type $P_{i,j}=P_i\times P_j$ in $\popo$:
\[\X=\{P_{1,1},P_{1,3},P_{1,5},P_{2,2},P_{2,4},P_{2,5},P_{3,1},
P_{3,2},P_{3,3},P_{4,1},P_{4,4}\}.\] Then, using {\tt CoCoA}, a
resolution of $R/I_{\X}$ has the form
\[0\rightarrow R^4\rightarrow R^{10}\rightarrow R^7\rightarrow
R\rightarrow R/I_{\X}\rightarrow 0\] where we have suppressed all
the bigraded shifts. The rank of the last syzygy module is $4$.
However, to make $\X$ ACM, we need to add at least 5 points:
$P_{1,2},P_{1,4},P_{2,1},P_{2,3}$, and $P_{3,4}$.
\end{example}



\begin{thebibliography}{99}

\bibitem{abm} S. Abrescia, L. Bazzotti, L. Marino,
Conductor degree and socle degree.  Matematiche (Catania)  {\bf
56}  (2001) 129--148 (2003).


\bibitem{b} L. Bazzotti, M.  Casanellas,
Separators of points on algebraic surfaces. J. Pure Appl. Algebra
{\bf 207} (2006) 319--326.

\bibitem{b2} L. Bazzotti,
Sets of points and their conductor. J. Algebra {\bf 283} (2005)
799--820.

\bibitem{Co}
CoCoATeam, CoCoA: a system for doing Computations in Commutative
Algebra. Available at {\tt http://cocoa.dima.unige.it}


\bibitem{E} D. Eisenbud, {\it Commutative algebra with a view toward algebraic geometry.} 
Graduate Texts in Mathematics, {\bf 150}. Springer-Verlag, New York, 1995.

\bibitem{gmr} A.V. Geramita, P. Maroscia, L. Roberts,
The Hilbert function of a reduced $k$-algebra. J. Lond. Math.
Soc. (2) {\bf 28} (1983) 443-452.

\bibitem{GuMaRa} S. Giuffrida, R. Maggioni, A. Ragusa,
On the postulation of $0$-dimensional subschemes on a smooth
quadric. Pacific J. Math. {\bf 155} (1992) 251--282.

\bibitem{GuMaRa2} S. Giuffrida, R. Maggioni, A. Ragusa,
Resolutions of $0$-dimensional subschemes of a smooth quadric. In
{\it  Zero-dimensional schemes (Ravello, 1992)}, de Gruyter,
Berlin (1994) 191--204.


\bibitem{GuMaRa3} S. Giuffrida, R. Maggioni, A. Ragusa,
Resolutions of generic points lying on a smooth quadric.
Manuscripta Math. {\bf 91} (1996) 421--444.


\bibitem {Gu2} E. Guardo, Fat point schemes on a smooth quadric.
J Pure Appl. Algebra \textbf {162} (2001) 183-208.

\bibitem{GVT} E. Guardo, A. Van Tuyl,
Fat points of $\popo$ and their Hilbert Functions.
Canad. J. Math {\bf 56} (2004) 716-741.

\bibitem{GVT1} E. Guardo, A. Van Tuyl, ACM sets
of points in multiprojective space.  (2007) To appear in
Collect. Math. {\tt arXiv:0707.3138v2}

\bibitem{guida}
M. Guida,
Conductor degree of points with maximal Hilbert function and socle degree.
Ricerche Mat. {\bf 52} (2003) 145--156.


\bibitem{HO} T. Harima, H. Okuyama,
The conductor of some special points in $P\sp 2$.
J. Math. Tokushima Univ.  {\bf 28}  (1994) 5--18.


\bibitem{m1} L. Marino,
Conductor and separating degrees for sets of points in $\pr^r$ and in $\popo$.   Boll. Unione Mat.
Ital. Sez. B Artic. Ric. Mat. (8) {\bf 9} (2006) 397--421.

\bibitem{m2} L. Marino,
The minimum degree of a surface that passes through all the
points of a 0-dimensional scheme but a point $P$.  {\it Algebraic
structures and their representations}, 315--332, Contemp. Math.,
{\bf 376}, Amer. Math. Soc., Providence, RI, 2005.

\bibitem{m3} L. Marino, A characterization of ACM 0-dimensional subschemes
of $\popo$. In preparation.

\bibitem{O}  F. Orecchia,
Points in generic position and conductors of curves with ordinary
singularities. J. London Math. Soc. (2) {\bf 24} (1981) 85--96.

\bibitem{S} A. Sodhi,
The conductor of points having the Hilbert function of a complete intersection in $P\sp 2$.
Canad. J. Math. {\bf 44} (1992) 167--179.

\bibitem{SVT} J. Sidman, A. Van Tuyl,
Multigraded regularity: syzygies and fat points.  Beitr\"age
Algebra Geom. {\bf 47}  (2006) 67--87.


\bibitem{VT1} A. Van Tuyl, The border of the Hilbert function of a set of points in
$\pnk$.  J. Pure
Appl. Algebra  {\bf 176} (2002) 223--247.

\bibitem{VT2} A. Van Tuyl, The Hilbert functions of ACM sets of points in
$\pnk$.  J.
Algebra  {\bf 264} (2003) 420--441.

\end{thebibliography}
\end{document}